\newtheorem{theorem}{Theorem}
\newtheorem{lemma}[theorem]{Lemma}
\newtheorem{cor}[theorem]{Corollary}
\newtheorem{prop}[theorem]{Proposition}
\newtheorem{conjecture}[theorem]{Conjecture}
\theoremstyle{definition}
\newtheorem{definition}[theorem]{Definition}
\newtheorem{rmk}[theorem]{Remark}
\DeclareMathOperator{\im}{\mathrm{im}}
\DeclareMathOperator{\Hom}{\mathrm{Hom}}
\DeclareMathOperator{\End}{\mathrm{End}}
\begin{document}
\pagestyle{plain}
\title{On simple transitive 2-representations\\ of bimodules over the dual numbers}
\author{Helena Jonsson}

\begin{abstract}
We study the problem of classification of simple transitive 2-representations
for the (non-finitary) $2$-category of bimodules over the dual numbers.
We show that simple transitive 2-representations with finitary apex 
are necessarily of rank $1$ or $2$, and those of rank $2$ are exactly
the cell $2$-representations. For $2$-representations of rank $1$,
we show that they cannot be constructed using the approach
of (co)algebra $1$-morphisms. We also propose an alternative 
definition of (co-)Duflo $1$-morphisms for finitary
$2$-categories and describe them
in the case of bimodules over the dual numbers.
\end{abstract}

\maketitle

\section{Motivation, introduction and description of the results}

Classification problems form an important and intensively studied class of 
questions in modern representation theory. One of the natural examples of 
these kinds of problems is the problem of classification of all ``simple''
representations of a given mathematical object. During the last 20 years,
a lot of attention was attracted to the study of representations of 
tensor categories and $2$-categories, see \cite{EGNO,Ma} and references therein.
In particular, there are by now a number of interesting tensor categories
(and $2$-categories) for which the structure of ``simple'' representations
is well-understood. To the best of our knowledge, the first deep results
of this kind can be found in \cite{Os1,Os2}, we refer to \cite{EGNO}
for more details.

Around 2010, Mazorchuk och Miemietz started a  systematic study of 
representation theory of finitary $2$-categories, see the original
series \cite{MM1,MM2,MM3,MM4,MM5,MM6} of papers by these authors. 
Finitary $2$-categories can be considered as natural $2$-analogues of 
finite dimensional algebras, in particular, they have various 
finiteness properties, analogous to those of the category of 
projective modules over a finite dimensional algebra. The paper
\cite{MM5} introduces  the notion of {\em simple transitive}  
$2$-representations of finitary $2$-categories and provides convincing
arguments, including an adaptation of the Jordan-H{\"o}lder theorem,
on why these $2$-representations are a natural $2$-analogue for the 
notion of a simple module over an associative algebra. This motivated
the natural problem of classification of simple transitive 
$2$-representations for various classes of finitary $2$-categories.
This problem was considered and solved in a number of special cases, 
see \cite{MM5,Zh2,Zi1,Zi2,MZ1,MZ2,MMZ1,MMZ2,MaMa,KMMZ,MT,MMMT,MMMZ,MMMTZ} 
and also \cite{Ma2} for a slightly outdated overview 
on the status of that problem.

Arguably, one of the most natural examples of a finitary $2$-category is the
$2$-category $\mathscr{C}_A$ of {\em projective bimodules} over a finite-dimensional
associative algebra $A$, introduced in \cite[Subsection~7.3]{MM1}.
Classification of simple transitive $2$-representation of
$\mathscr{C}_A$ is given in \cite{MMZ2}, with the special case of a 
self-injective $A$ treated already in \cite{MM5,MM6}. The reason to restrict
to projective bimodules is the observation that, in the general case, 
the tensor category $A\text{-}\mathrm{mod}\text{-}A$ of {\em all}
finite dimensional $A$-$A$-bimodules is not finitary because it has
infinitely many indecomposable objects. The only basic connected
algebras $A$, for which  $A\text{-}\mathrm{mod}\text{-}A$ is finitary,
are the radical square zero quotients of the path algebras of 
uniformly oriented type
$A$ Dynkin quivers, see \cite{MZ2}. Moreover, for almost all $A$,
the category $A\text{-}\mathrm{mod}\text{-}A$ is wild,
that is the associative algebra $A\otimes_{\Bbbk}A^{\mathrm{op}}$,
whose module category is equivalent to $A\text{-}\mathrm{mod}\text{-}A$,
has wild representation type, and hence indecomposable objects
of $A\text{-}\mathrm{mod}\text{-}A$ are not even known
(and, perhaps, never will be known).

The smallest example of the algebra $A$ for which the category 
$A\text{-}\mathrm{mod}\text{-}A$ is not finitary, but is, at least,
tame, is the algebra $D:=\Bbbk[x]/(x^2)$ of {\em dual numbers}.
The combinatorics of tensor product of indecomposable objects
in $D\text{-}\mathrm{mod}\text{-}D$ is described in \cite{Jo1,Jo2}.
In particular, although not being finitary itself, 
$D\text{-}\mathrm{mod}\text{-}D$ has a lot of finitary 
subcategories and subquotients. The main motivation for the present paper
is to understand simple transitive $2$-representations of
$D\text{-}\mathrm{mod}\text{-}D$ which correspond to simple 
transitive $2$-representations of its finitary subquotients.

Our main result is Theorem~\ref{mainthm} which can be found
in Subsection~\ref{s-maint}. It asserts that simple transitive
$2$-representations of $D\text{-}\mathrm{mod}\text{-}D$ with
finitary apex are necessarily of rank $1$ or $2$ and, in the
latter case, each such $2$-representation is necessarily 
equivalent to a so-called {\em cell $2$-representation},
which is an especially nice class of $2$-representations.
Unfortunately, at this stage we are not able  to classify
(or, for that matter, even to construct, with one exception) 
rank $1$ simple transitive $2$-representations. One possible
reason for that is given in Theorem~\ref{nonconstr}
in Subsection~\ref{s-nonc} which asserts that potential 
simple transitive $2$-representations of rank
$1$ cannot be constructed using the approach
of (co)algebra $1$-morphisms, developed in \cite{MMMT}
for the so-called {\em fiat $2$-categories}, that is
finitary $2$-categories with a weak involution and adjunction morphism.
Needless to say, neither $D\text{-}\mathrm{mod}\text{-}D$
nor any of its finitary subquotients is fiat. 

Section~\ref{sdufloo} and \ref{scoalgg} summarize, in some sense,
the outcome of our failed attempt to adjust the approach of
\cite{MMMT} at least for construction of simple transitive
$2$-representations of $D\text{-}\mathrm{mod}\text{-}D$.
Due to the fact that $D\text{-}\mathrm{mod}\text{-}D$
is not fiat, several classical notions for fiat 
$2$-categories require non-trivial adaptation to the
more general setup of $D\text{-}\mathrm{mod}\text{-}D$.
One of these, discussed in detail in Section~\ref{sdufloo},
is that of a {\em Duflo $1$-morphism}. Originally, it is
defined in \cite{MM1} in the fiat setup and slightly
adjusted in \cite{Zh} to a more general finitary setting.
Here we propose yet another alternative definition
of Duflo $1$-morphisms (and the dual notion of co-Duflo
$1$-morphisms) using certain universal properties, 
see Subsections~\ref{s1-duf} and \ref{s2-duf}.
We show in Proposition~\ref{pduf-fin} that our notion agrees
with the notion of Duflo $1$-morphisms from \cite{MM1}
in the fiat case. We show that some left cells
in $D\text{-}\mathrm{mod}\text{-}D$ have a Duflo
$1$-morphism and that some other left cells have
a co-Duflo $1$-morphism, see Subsections~\ref{subs_duflo}
and \ref{s2-duf}. In Section~\ref{scoalgg}, we further
show that these Duflo and co-Duflo $1$-morphisms
admit the natural structure of 
coalgebra and algebra $1$-morphisms, respectively.

All necessary preliminaries are collected in Section~\ref{prelim}.
Our main  Theorem~\ref{mainthm} has four statements.
The first one is proved in Subsection~\ref{s0p}.
The other three are proved in Sections~\ref{s1p}, 
\ref{sec_rank2_cell} and \ref{s3p}, respectively.
\vspace{5mm}

{\bf Acknowledgments.} This research is partially supported by G{\"o}ran Gustafssons Stiftelse. The author wants to thank her supervisor Volodymyr Mazorchuk for many helpful comments and discussions.

\section{Preliminaries}\label{prelim}

\subsection{2-categories}
A \emph{2-category} $\mathscr{C}$ consists of
\begin{itemize}
\item objects $\mathtt{i},\mathtt{j},\mathtt{k},$...;
\item for each pair of objects $\mathtt{i},\mathtt{j}$, a small category 
$\mathscr{C}(\mathtt{i},\mathtt{j})$ of {\em morphisms}
from $\mathtt{i}$ to $\mathtt{j}$,  objects of 
$\mathscr{C}(\mathtt{i},\mathtt{j})$ are called 1-morphisms $F,G,H,$...,
and morphisms of $\mathscr{C}(\mathtt{i},\mathtt{j})$ 
are called 2-morphisms $\alpha ,\beta,$...;
\item for each object $\mathtt{i}$, an identity 1-morphism $\mathbbm{1}_\mathtt{i}$;
\item bifunctorial composition 
$\circ:\mathscr{C}(\mathtt{j},\mathtt{k})\times \mathscr{C}(\mathtt{i},\mathtt{j})
\to \mathscr{C}(\mathtt{i},\mathtt{k})$.
\end{itemize}
This datum is supposed to satisfy the obvious set of strict axioms. 
The internal composition of $2$-morphisms in $\mathscr{C}(\mathtt{i},\mathtt{j})$ 
is called {\em vertical} and denoted by $\circ_v$. The composition of 
$2$-morphisms induced by $\circ$ is called {\em horizontal} and denoted
$\circ_h$.

Let $\Bbbk$ be a field.
Important examples of 2-categories are
\begin{itemize}
\item $\mathbf{Cat}$, the 2-category whose objects are small categories, 1-morphisms are functors, and 2-morphisms are natural transformations of functors;
\item $\mathfrak{A}^f_\Bbbk$, the 2-category whose objects are finitary $\Bbbk$-linear categories, 1-mor\-phisms are additive $\Bbbk$-linear functors, and 2-morphisms are natural transformations of functors;
\item $\mathfrak{R}_\Bbbk$, the 2-category of finitary $\Bbbk$-linear abelian categories, whose objects are small categories equivalent to module categories of finite-dimensional associative $\Bbbk$-algebras, 1-morphisms are right exact additive $\Bbbk$-linear functors, and 2-morphisms are natural transformations of functors.
\end{itemize}

\subsection{2-representations}
Let $\mathscr{C}$ be a 2-category. A \emph{2-representation} of $\mathscr{C}$ is a  strict 2-functor $\mathbf{M}:\mathscr{C}\to \mathbf{Cat}$.

For example, given an object $\mathtt{i}$ in $\mathscr{C}$, we can 
define the \emph{principal representation} $\mathbf{P}_\mathtt{i}=\mathscr{C}(\mathtt{i},-)$. 
A \emph{finitary 2-representation} of $\mathscr{C}$ is a  strict 2-functor $\mathbf{M}:\mathscr{C}\to \mathfrak{A}^f_\Bbbk$.

A finitary 2-representation $\mathbf{M}$ of $\mathscr{C}$ is called \emph{transitive} if, for any indecomposable object $X\in \mathbf{M}(\mathtt{i})$ and $Y\in \mathbf{M}(\mathtt{j})$, there is a 1-morphism $U$ in $\mathscr{D}$ such that $Y$ is isomorphic to a direct summand of $\mathbf{M}(U)X$. 
We, further, say that
$\mathbf{M}$ \emph{simple} if it has no proper nonzero $\mathscr{C}$-stable ideals. While simplicity implies transitivity, 
we follow
\cite{MM5} and speak of simple transitive 2-representations to emphasize the 
two levels (objects and morphisms) of the involved structure.

All 2-representations of $\mathscr{C}$ form a 2-category, 
see \cite[Subsection~2.3]{MM3} for details. In particular,
two 2-representations $\mathbf{M}$ and $\mathbf{N}$ of $\mathscr{C}$ are \emph{equivalent} if there is a 2-natural transformation $\Phi: \mathbf{M}\to \mathbf{N}$ which restricts to an equivalence $\mathbf{M}(\mathtt{i})\to \mathbf{N}(\mathtt{i})$ for every object $\mathtt{i}\in \mathscr{C}$.

If $\mathscr{C}$ has only one object $\mathtt{i}$, we say that a finitary 2-representation $\mathbf{M}$ of $\mathscr{C}$ has \emph{rank} $r$ if the category $\mathbf{M}(\mathtt{i})$ has exactly $r$ isomorphism classes of  indecomposable objects.

\subsection{Abelianization}

For every finitary $2$-representation $\mathbf{M}$ of $\mathscr{C}$,
we can consider its {\em (projective) abelianization}
$\overline{\mathbf{M}}$ as defined in \cite[Section~3]{MMMT}.
Then $\overline{\mathbf{M}}$ is a $2$-functor from $\mathscr{C}$
to $\mathfrak{R}_\Bbbk$ and, up to equivalence,
$\mathbf{M}$ is recovered by restricting $\overline{\mathbf{M}}$
to the subcategories of projective objects in the 
underlying abelian categories of the 
{\em abelian} $2$-representation $\overline{\mathbf{M}}$.

There is also the dual notion of {\em (injective) abelianization}
$\underline{\mathbf{M}}$.

\subsection{Cells and cell 2-representations}
One the set of isomorphism classes of indecomposable 1-morphisms in $\mathscr{C}$, define the \emph{left preorder} $\leq_L$ by $F\leq_L G$ if there is some $H$ such that $G$ is a direct summand of $H\circ F$. The induced equivalence relation $\sim_L$ is called \emph{left equivalence}, and the equivalence classes \emph{left cells}. Similarly, we can define the right preorder $\leq_R$ by composing 
 with $H$
from the right, and two-sided preorder $\leq_J$ by composing 
with $H_1$ and $H_2$
from both sides. Right and two-sided equivalence and cells are also defined analogously.

For any transitive 2-representation $\mathbf{M}$ of $\mathscr{C}$, there is, by \cite{CM}, a unique two-sided cell, maximal with respect to the two-sided preorder, which is not annihilated by $\mathbf{M}$.
This two-sided cell is called the \emph{apex} of $\mathbf{M}$.

A two-sided cell $\mathcal{J}$ is called \emph{idempotent} if it contains $F$, $G$ and $H$ such that $H$ is isomorphic to a direct summand of $F\circ G$. The apex of a 2-representation is necessarily idempotent.

Let $\mathcal{L}$ be a left cell in $\mathscr{C}$ and let $\mathtt{i}=\mathtt{i}_\mathcal{L}$ be the object such that all 1-morphisms in $\mathcal{L}$ start at $\mathtt{i}$.
Then the principal representation $\mathbf{P}_\mathtt{i}$ has a subrepresentation given by the additive closure of all 1-morphisms $F$ such that
$F\geq_L \mathcal{L}$. This, in turn, has a unique simple transitive quotient which we call the \emph{cell 2-representation}
associated to $\mathcal{L}$ and denote by $\mathbf{C}_\mathcal{L}$. We refer to \cite{MM1,MM2} for more details.

\subsection{Action matrices}

Let $\mathbf{M}$ be a finitary $2$-representation of $\mathscr{C}$
and $F$ a $1$-mor\-phism in $\mathscr{C}(\mathtt{i},\mathtt{j})$.
Let $X_1,X_2,\dots,X_k$ be a complete list of representatives of
isomorphism classes of indecomposable objects in
$\mathbf{M}(\mathtt{i})$ and $Y_1,Y_2,\dots,Y_m$ be a 
complete list of representatives of isomorphism classes of 
indecomposable objects in $\mathbf{M}(\mathtt{j})$.
Then we can define the {\em action matrix} $[F]$ of $F$ as
the integral $m\times k$-matrix $(r_{ij})_{i=1,\dots,m}^{j=1,\dots,k}$,
where $r_{ij}$ is the multiplicity of $Y_i$ as a direct summand of
$\mathbf{M}(F)X_j$. Clearly, we have $[FG]=[F][G]$.

If $\mathscr{C}$ has only one object, then $\mathbf{M}$ is transitive 
if and only if all coefficients of $[F]$ are positive, where
$F$ is such that it contains, as direct summands, all indecomposable
$1$-morphisms in the apex of $\mathbf{M}$.

If $\overline{\mathbf{M}}(F)$ is exact, then we can also consider 
the matrix $[[F]]$ which bookkeeps the composition
multiplicities of the values of $\overline{\mathbf{M}}(F)$ on simple
objects in $\overline{\mathbf{M}}(\mathtt{i})$.

\section{Bimodules over the dual numbers and the main result}

\subsection{The 2-category of bimodules over the dual numbers}

In the remainder of the paper,
we work over an algebraically closed field $\Bbbk$ of characteristic 0.
Denote by $D=\Bbbk [x]/(x^2)$ the dual numbers. Fix a small category $\mathcal{C}$ equivalent to $D$-mod. Let $\mathscr{D}$ be the 2-category which has
\begin{itemize}
\item one object \texttt{i} (which we identify with $\mathcal{C}$),
\item as 1-morphisms, all endofunctors of $\mathcal{C}$ isomorphic to tensoring with finite dimensional $D$-$D$-bimodules,
\item as 2-morphisms, all natural transformations of functors (these  are given by homomorphisms of the corresponding $D$-$D$-bimodules).
\end{itemize}

Indecomposable $D$-$D$-bimodules can be classified, 
up to isomorphism, following \cite{BR}, \cite{WW}.
Using the notation from \cite{Jo2}, they are the following.
\begin{itemize}
\item The (unique) projective-injective bimodule $D\otimes_{\Bbbk} D$.
\item The band bimodules $B_k(\lambda )$, indexed by
$k\in\mathbb{Z}_{>0}$ and $\lambda\in\Bbbk\setminus\{0\}$. 
The bimodule $B_k(\lambda )$ can be depicted as follows:
\begin{displaymath}
\xymatrix{
\Bbbk^k\ar@/_/[d]_{x\cdot_{-}=\mathrm{Id}}
\ar@/^/[d]^{{}_{-}\cdot x=Q_k(\lambda)}\\
\Bbbk^k
}
\end{displaymath}
where $Q_k(\lambda)$ is the $k\times k$ Jordan cell with 
eigenvalue $\lambda$. In particular, the regular bimodule 
${}_DD_D$ is  isomorphic to the band bimodule $B_1(1)$.
\item String bimodules of four shapes $W$, $S$, $N$ and $M$ indexed by 
$k\in\mathbb{Z}_{\geq 0}$. For a string bimodule $U$, the integer $k$ is the number of \emph{valleys} in the graph representing this bimodule, 
alternatively, $k=\dim (DU\cap UD)$. The graphs representing the 
bimodules $W_1$, $S_1$, $N_1$ and $M_1$ look, respectively, as
follows (here vertices $\bullet$ and $\circ $
represent a fixed basis with $\circ $ depicting the valley, 
the non-zero right action  of $x\in D$ is depicted by horizontal arrows, 
the non-zero left action of  $x\in D$ is depicted by vertical arrows
and all non-zero coefficients of both actions are equal to $1$):
\begin{displaymath}
\xymatrix@=15pt{
\bullet\ar[d]&\\
\circ&\bullet\ar[l]
},\quad\quad
\xymatrix@=15pt{
\bullet\ar[d]&\\
\circ&\bullet\ar[l]\ar[d]\\
&\bullet
},\quad\quad
\xymatrix@=15pt{
\bullet&\bullet\ar[d]\ar[l]&\\
&\circ&\bullet\ar[l]
},\quad\quad
\xymatrix@=15pt{
\bullet&\bullet\ar[d]\ar[l]&\\
&\circ&\bullet\ar[l]\ar[d]\\
&&\bullet
}.
\end{displaymath}
\end{itemize}

An indecomposable bimodule is called $\Bbbk$-split if it is of the form $U\otimes_\Bbbk V$ for indecomposables $U\in D$-mod and $V\in\,\,$mod-$D$. The $\Bbbk$-split bimodules $D\otimes D$, $W_0$, $S_0$ and $N_0$ form the unique maximal two-sided cell
$\mathcal{J}_{\Bbbk \text{-split}}$, 
with left cells inside it  indexed by indecomposable right $D$-modules and right cells inside it indexed by indecomposable left $D$-modules, cf. \cite{MMZ1}.

As was shown in \cite{Jo2}, band bimodules form one cell
(both left, right and two-sided), which we denote $\mathcal{J}_\mathrm{band}$. Moreover, for each positive integer $k$, the four string bimodules with $k$ valleys form a two sided cell $\mathcal{J}_k$, see Section~\ref{section_Jk} for more details. The string bimodule $M_0$ forms its own two-sided cell $\mathcal{J}_0$.
The two-sided cells are linearly ordered as follows:
\begin{equation*}
\mathcal{J}_{\Bbbk \text{-split}}> _J \mathcal{J}_{ M_0}> _J 
\mathcal{J}_1  >_J \mathcal{J}_{2} >_J 
\ldots >_J \mathcal{J}_{\text{band}}.
\end{equation*}
All two-sided cells except $\mathcal{J}_{ M_0}$ are idempotent. Note also that all two-sided cells except the minimal cell $\mathcal{J}_\mathrm{band}$ are finite.

\subsection{ The main result}\label{s-maint}
The following theorem is the main result of this paper.

\begin{theorem}\label{mainthm}
\begin{enumerate}[$($i$)$]
\item Any simple transitive 2-representation of $\mathscr{D}$ with apex $\mathcal{J}_{\Bbbk \text{-split}}$ is equivalent to a cell 2-representation.
\item Any simple transitive 2-representation of $\mathscr{D}$ with apex $\mathcal{J}_k$, where $k\geq 1$, has rank 1 or rank 2. 
\item Any simple transitive 2-representation of $\mathscr{D}$ with apex $\mathcal{J}_k$, where $k\geq 1$, of rank 2 is equivalent to the cell 2-representation $\mathbf{C}_{\mathcal{L}}$, where $\mathcal{L}=\{ M_k,N_k\}$ (or, equivalently, $\mathcal{L}=\{ W_k,S_k\}$).
\item There exists a simple transitive 2-representation of $\mathscr{D}$ with apex $\mathcal{J}_1$ which has rank 1.
\end{enumerate}
\end{theorem}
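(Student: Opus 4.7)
The plan is to handle the four parts of Theorem~\ref{mainthm} in order, using a mix of reduction to known classifications for $\Bbbk$-split cells, a careful action-matrix analysis inside the cells $\mathcal{J}_k$ based on the tensor product combinatorics from \cite{Jo1,Jo2}, and a direct construction for the rank-$1$ example. Throughout, I fix a simple transitive $2$-representation $\mathbf{M}$ of $\mathscr{D}$ with a prescribed apex $\mathcal{J}$, observe that all indecomposable $1$-morphisms strictly above $\mathcal{J}$ in the two-sided order act as zero on $\mathbf{M}(\mathtt{i})$, and exploit that some $1$-morphism in $\mathcal{J}$ must act with strictly positive integer multiplicities on every indecomposable object of $\mathbf{M}(\mathtt{i})$.

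For (i), the apex $\mathcal{J}_{\Bbbk\text{-split}}$ is generated by bimodules of the form $U\otimes_{\Bbbk}V$, so the $2$-full subcategory of $\mathscr{D}$ obtained by taking the additive closure of $\mathcal{J}_{\Bbbk\text{-split}}\cup\{\mathbbm{1}_{\mathtt{i}}\}$ and quotienting by the ideal of $1$-morphisms in strictly lower cells is essentially the $2$-category of projective bimodules over $D$ considered in \cite{MMZ1,MMMZ}. Since simple transitive $2$-representations with apex equal to the top cell of the projective-bimodule $2$-category are classified there and are all equivalent to cell $2$-representations, restricting $\mathbf{M}$ and applying loc.\ cit.\ gives the classification; a short check verifies that no $1$-morphism from a lower cell can introduce extra endomorphisms that would destroy the equivalence.

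For (ii) and (iii), I work inside a single cell $\mathcal{J}_k$ with $k\geq 1$, and use that every left cell of $\mathcal{J}_k$ has size $2$, namely $\{M_k,N_k\}$ and $\{W_k,S_k\}$. Let $n$ be the rank of $\mathbf{M}$. The action matrices $[W_k],[S_k],[N_k],[M_k]$ are $n\times n$ non-negative integer matrices that satisfy the multiplication table induced by the tensor products of string bimodules computed in \cite{Jo2}, with all ``leftover'' summands landing in $\mathcal{J}_\text{band}$ and hence acting as zero under $\mathbf{M}$ by the apex condition. The plan is to combine the Perron--Frobenius theorem, applied to the positive matrix $[W_k]+[S_k]+[N_k]+[M_k]$, with the specific structure constants from \cite{Jo2}, following the template of the rank-bound arguments in \cite{KMMZ,MMMT}, to conclude $n\leq 2$. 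For (iii), in the case $n=2$, I would read off the action matrices of the cell $1$-morphisms from this analysis, verify that after a suitable reindexing of indecomposables they coincide with those of the cell $2$-representation $\mathbf{C}_{\{M_k,N_k\}}$, and then invoke the standard fact from \cite{MM5,MM6} that a simple transitive $2$-representation whose action matrices on a generating cell match those of a cell $2$-representation is equivalent to that cell $2$-representation.

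For (iv), I would construct an explicit rank-$1$ model: take $\mathbf{M}(\mathtt{i})=\mathrm{add}(X)$ for a suitable indecomposable $X$, and define the action of each indecomposable $1$-morphism by prescribing the multiplicity of $X$ as a summand of its image so as to be compatible with the Grothendieck-ring relations of $\mathscr{D}$ modulo the ideal generated by the cells strictly below $\mathcal{J}_1$, then check simplicity and transitivity directly. The main obstacle, I expect, is part (ii): extracting the rank bound $n\leq 2$ from the tensor combinatorics without the usual fiat machinery of a weak involution and adjunction $2$-morphisms (which $\mathscr{D}$ does not possess) requires a careful case analysis that rules out $n\geq 3$ using only positivity and the explicit fusion rules for string bimodules, and it is here that the full strength of the combinatorics worked out in \cite{Jo2} will have to be deployed.
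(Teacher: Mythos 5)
Your overall strategy for (i) is the paper's, but your identification of the relevant subcategory is off: the additive closure of $\mathcal{J}_{\Bbbk\text{-split}}\cup\{\mathbbm{1}_{\mathtt{i}}\}$ is \emph{not} the $2$-category of projective $D$-$D$-bimodules (that would be $\mathrm{add}(D\otimes_\Bbbk D)$ alone, which misses $W_0,S_0,N_0$). The correct reduction is that $\mathrm{End}_{D\text{-}D}(U\otimes_\Bbbk V)\cong \mathrm{End}_{D\text{-}}(U)\otimes_\Bbbk\mathrm{End}_{\text{-}D}(V)$ makes this subcategory biequivalent to $\mathscr{C}_A$ for the \emph{two-vertex} algebra $A=\End_{D\text{-}}({}_DD\oplus{}_D\Bbbk)$, after which \cite[Theorem 12]{MMZ2} applies. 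This is fixable, but as written your reduction target is wrong.

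The genuine gaps are in (ii)--(iv). For (ii) you defer precisely the hard step; the missing ideas are concrete: setting $F=W_k+S_k+N_k+M_k$ gives $[F]^2=4[F]$ modulo higher cells, so $[F]$ is a quasi-idempotent irreducible positive integer matrix, classified in \cite{TZ} (this bounds the rank by $4$, not by $2$); ruling out ranks $3$ and $4$, and one of the two rank-$2$ configurations, then requires the adjunction $(S_k\otimes_D-,\,N_k\otimes_D-)$ (a nontrivial computation with $\Hom_{D\text{-}}(S_k,D)\cong N_k$), which yields $[[N_k]]=[S_k]^{\mathrm{tr}}$ and the zero-column/zero-row constraints of Lemma~\ref{SN}. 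Positivity and the fusion table alone do not suffice. For (iii), the claim that matching action matrices on the apex forces equivalence to the cell $2$-representation is not a standard fact and is false in general; the paper must first pin down the underlying algebra $B$ (using exactness of $\overline{\mathbf{M}}(N_k)$, the projectivity statement of Theorem~\ref{thm_projective}, and simplicity to kill extra arrows so that the Cartan matrix is $\left[\begin{smallmatrix}1&0\\1&1\end{smallmatrix}\right]$) before running the Yoneda-type comparison with $\mathbf{C}_{\mathcal{L}}$. For (iv), one cannot simply ``prescribe multiplicities'' on $\mathrm{add}(X)$ and obtain a $2$-functor; the actual construction is to take the cell $2$-representation $\mathbf{C}_{\mathcal{J}_0}$ of the singleton, non-idempotent cell $\{M_0\}$ and observe that $U\otimes_D M_0\simeq M_0\oplus(\Bbbk\text{-split})$ for $U\in\mathcal{J}_1$, so its apex is $\mathcal{J}_1$ and its rank is $1$. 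Without these inputs the proposal does not establish parts (ii)--(iv).
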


Taking Theorem~\ref{mainthm} into account,
the following conjecture seems very natural.

\begin{conjecture}
For each $k\geq 1$, there exists a unique,
up to equivalence,
simple transitive 2-representation 
of $\mathscr{D}$  of rank 1 with apex $\mathcal{J}_k$.
\end{conjecture}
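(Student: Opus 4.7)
The plan is to treat existence and uniqueness separately, lifting the techniques used in the $k=1$ case (Theorem~\ref{mainthm}(iv)) to all $k\ge 1$.

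First I would reduce everything to numerical data. If $\mathbf{M}$ is a rank $1$ simple transitive $2$-representation of $\mathscr{D}$ with apex $\mathcal{J}_k$, then $\mathbf{M}(\mathtt{i}) \simeq A\text{-proj}$ for a local finite-dimensional algebra $A$, and each $1$-morphism $F$ in the apex acts as tensoring with an $A$-$A$-bimodule $\mathbf{M}(F)$ that is free of rank $[F]$ as a right $A$-module. Using the explicit tensor-product decompositions in $\mathcal{J}_k$ from \cite{Jo2}, the relations $[FG]=[F][G]$ become a system of multiplicative equations in the four positive integers $[W_k]$, $[S_k]$, $[N_k]$, $[M_k]$. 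Combined with the transitivity constraint and the symmetries of $\mathcal{J}_k$ (the same symmetries that shape the rank~$2$ cell $2$-representation in Theorem~\ref{mainthm}(iii)), this system should admit a unique positive integer solution, thereby pinning down the action matrices for every rank $1$ simple transitive $2$-representation with apex $\mathcal{J}_k$.

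Next I would construct existence, possibly by induction on $k$ starting from the case $k=1$. A natural candidate for the underlying algebra $A$ is the endomorphism algebra of the (co-)Duflo $1$-morphism in the appropriate left cell of $\mathcal{J}_k$, as described in Sections~\ref{sdufloo} and~\ref{scoalgg}, and a natural candidate for the bimodule $\mathbf{M}(F)$ associated with each string bimodule $F\in\mathcal{J}_k$ is read off from the internal hom-structure of that (co-)Duflo $1$-morphism. For uniqueness, any two candidates $\mathbf{M}$ and $\mathbf{M}'$ would share the numerical data by the previous step, their endomorphism algebras would both be identified with the one forced by the (co-)Duflo $1$-morphism, and the matching bimodule structures could then be assembled into an equivalence of $2$-functors via evaluation at the unique indecomposable of $\mathbf{M}(\mathtt{i})$.

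The main obstacle will be the existence step: Theorem~\ref{nonconstr} rules out the standard (co)algebra $1$-morphism approach from \cite{MMMT}, so the construction must proceed by hand. Two points in particular need care. First, the bimodule structures chosen on the various $\mathbf{M}(F)$ must be mutually compatible with every tensor decomposition within $\mathcal{J}_k$, which requires checking a large number of coherences that would normally follow automatically from a (co)algebra presentation. Second, and more delicately, one must extend the action to the infinite cell $\mathcal{J}_\mathrm{band}$, together with $\mathcal{J}_{k+1}, \mathcal{J}_{k+2}, \ldots$, which need not be annihilated, and verify the consistency of this infinite collection of data with the prescribed action on the apex.
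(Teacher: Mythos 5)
This statement is a \emph{conjecture} in the paper, not a theorem: the author explicitly states that she is unable to classify, or even (except for $k=1$) construct, rank~$1$ simple transitive $2$-representations with apex $\mathcal{J}_k$. There is therefore no proof in the paper to compare against, and your proposal does not close the gap either --- it is a research plan whose essential steps are left as acknowledged ``obstacles'' rather than carried out. Concretely: the numerical reduction in your first paragraph yields nothing beyond what Proposition~\ref{prop.matrix} already gives (in rank $1$ every action matrix is $[1]$), so it does not ``pin down'' the algebra $A$ underlying $\mathbf{M}(\mathtt{i})$ or the $A$-$A$-bimodules realizing the action; these are exactly the unknowns. Your existence step is only a ``natural candidate'' with no verification, and it runs directly into Theorem~\ref{nonconstr}: reading the data off the (co-)Duflo $1$-morphisms $M_k$ and $W_k$ is precisely the (co)algebra-$1$-morphism construction, which by Corollary~\ref{cormkrep} produces the rank~$2$ cell $2$-representation and which Theorem~\ref{nonconstr} shows \emph{cannot} produce a rank~$1$ representation for $k\geq 2$. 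The one existence result the paper does have, Theorem~\ref{mainthm}(iv), relies on the non-idempotent singleton cell $\mathcal{J}_0=\{M_0\}$ sitting immediately above $\mathcal{J}_1$ in the two-sided order; there is no analogous cell above $\mathcal{J}_k$ for $k\geq 2$, so this mechanism does not induct.

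The uniqueness half is likewise unsupported: the claim that any two rank~$1$ representations ``would both be identified with the one forced by the (co-)Duflo $1$-morphism'' presupposes that the underlying algebra and bimodule structures are determined, which is the whole content of the conjecture. Finally, you correctly flag that one must control the action of the infinitely many cells $\mathcal{J}_{k+1},\mathcal{J}_{k+2},\dots$ and $\mathcal{J}_{\mathrm{band}}$ below the apex, but you offer no mechanism for doing so. In short, the statement remains open, and your proposal identifies the right difficulties without resolving any of them.
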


\subsection{Proof of Theorem~\ref{mainthm}(i)}
\label{s0p}

For an arbitrary indecomposable $\Bbbk$-split $D$-$D$-bi\-mo\-du\-le
$U\otimes_{\Bbbk}V$, using adjunction and projectivity
of both $V$ and $\mathrm{End}_{D\text{-}}(U)$ as $\Bbbk$-modules, we have 
\begin{equation}\label{eq1.1}
\begin{array}{rcl}
\mathrm{End}_{D\text{-}D}(U\otimes_{\Bbbk}V)&\cong&
\mathrm{Hom}_{D\text{-}D}(U\otimes_{\Bbbk}V,U\otimes_{\Bbbk}V)\\ 
&\cong&\mathrm{Hom}_{\,\text{-}D}(V,
\mathrm{Hom}_{D\text{-}}(U,U\otimes_{\Bbbk}V))\\
&\cong&\mathrm{Hom}_{\,\text{-}D}(V,
\mathrm{Hom}_{D\text{-}}(U,U)\otimes_{\Bbbk}V)\\
&\cong&\mathrm{Hom}_{\Bbbk}(\Bbbk,
\mathrm{Hom}_{\text{-}D}(V,
\mathrm{Hom}_{D\text{-}}(U,U)\otimes_{\Bbbk}V))\\
&\cong&\mathrm{Hom}_{\Bbbk}(\Bbbk,
\mathrm{Hom}_{D\text{-}}(U,U)\otimes_{\Bbbk}
\mathrm{Hom}_{\text{-}D}(V,V))\\
&\cong& \mathrm{End}_{D\text{-}}(U)\otimes_{\Bbbk}
\mathrm{End}_{\text{-}D}(V).
\end{array}
\end{equation}
Consider the finite dimensional 
algebra $A=\End_{D-}({}_DD\oplus {}_D\Bbbk )$ (note that
it can be described as the path algebra of the quiver
\begin{align*}
\xymatrix{1\ar@/^1pc/[r]^{\alpha} & 2\ar@/^1pc/[l]^{\beta}}
\end{align*}
modulo the relation $\alpha\beta =0$). 
Then we have the $2$-category $\mathscr{C}_A$
of projective $A$-$A$-bimodules. 
By \cite[Theorem 12]{MMZ2}, any simple transitive $2$-representation
of $\mathscr{C}_A$ is equivalent to a cell $2$-representation.

Denote by $\mathscr{A}$ the $2$-full $2$-subcategory
of $\mathscr{D}$ given by the additive closure
inside $\mathscr{D}$ of the regular $D$-$D$-bimodule 
and all $\Bbbk$-split $D$-$D$-bimodules.
The  computation in \eqref{eq1.1} implies that the $2$-categories
$\mathscr{C}_A$ and $\mathscr{A}$ are biequivalent.
Consequently, any simple transitive $2$-representation
of $\mathscr{A}$ is equivalent to a cell $2$-representation.

Let $\mathbf{M}$ be a simple transitive 
$2$-representation of $\mathscr{D}$ with
apex $\mathcal{J}_{\Bbbk \text{-split}}$.
Then the restriction of $\mathbf{M}$ to 
$\mathscr{A}$ is also simple transitive and hence
this restriction is equivalent to a cell
$2$-representation of $\mathscr{A}$ by the previous
paragraph. Now, the arguments similar to the ones in 
\cite[Theorem~18]{MM5} imply that
$\mathbf{M}$ is equivalent to a cell $2$-representation
of $\mathscr{D}$.
This proves Theorem~\ref{mainthm}(i).

\subsection{The two-sided cell $\mathcal{J}_k$, where $k\geq 1$}\label{section_Jk}

Fix a positive integer $k$. Recall from \cite{Jo2} that the two-sided cell $\mathcal{J}_k$ has the following egg-box diagram in which
columns are left cells and rows are right cells.
\begin{align*}
\begin{array}{|c|c|}
\hline
W_k & N_k\\
\hline
S_k& M_k\\
\hline
\end{array}
\end{align*}
Modulo the two-sided cells that are strictly larger with respect to
the two sided order, the multiplication table of
$\mathcal{J}_k$ is as follows.
\begin{equation}\label{eqtable}
\begin{array}{c|c|c|c|c}
\otimes_D & W_k & S_k & N_k & M_k \\
\hline
W_k & W_k & W_k & N_k & N_k \\
\hline
S_k & S_k & S_k & M_k & M_k\\
\hline
N_k & W_k & W_k & N_k & N_k\\
\hline
M_k & S_k & S_k & M_k & M_k \\
\end{array}
\end{equation}

\begin{lemma}\label{adj}
For any $k\geq 0$, the pair $(S_k\otimes _D - ,\, N_k\otimes _D -)$ is an adjoint pair of endofunctors of $D$-mod.
\end{lemma}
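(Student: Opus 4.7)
The plan is to deduce the adjunction from tensor-hom duality. For any $D$-$D$-bimodule $M$, the functor $\Hom_{D\text{-}}(M,-)$ (endowed with the left $D$-structure coming from the right $D$-structure on $M$ via $(d\cdot f)(m) = f(md)$) is the right adjoint of $M\otimes_D -$ on $D\text{-mod}$; and if $M$ is finitely generated projective as a left $D$-module, there is moreover a canonical natural isomorphism $\Hom_{D\text{-}}(M,-) \cong \Hom_{D\text{-}}(M,D)\otimes_D -$ of endofunctors of $D\text{-mod}$. Applied to $M = S_k$, this reduces the lemma to two claims: $(1)$ $S_k$ is free as a left $D$-module; and $(2)$ $\Hom_{D\text{-}}(S_k, D) \cong N_k$ as $D$-$D$-bimodules.

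For $(1)$, I would read off the graph of $S_k$. It is a zigzag with $k$ valleys and $2k+2$ basis vectors (with the convention that $k=0$ gives a single vertical arrow $\bullet\to\bullet$, so $S_0 = D\otimes_\Bbbk\Bbbk$). The $k+1$ vertices sitting at the sources of the vertical (left-$x$) arrows form a free generating set: each generates a copy of $D$ via its outgoing vertical arrow, and their $D$-linear spans together exhaust all $2k+2$ basis vectors. Hence $S_k \cong D^{k+1}$ as a left $D$-module.

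For $(2)$, let $f_1,\dots,f_{k+1}$ be the basis of $\Hom_{D\text{-}}(S_k, D)$ dual to the generators produced in $(1)$, and set $g_i := f_i\cdot x$. Then $\{f_i, g_i\}_{i=1}^{k+1}$ is a $\Bbbk$-basis. The left $D$-action is determined by $(x\cdot f)(s)=f(sx)$, so tracing each horizontal (right-$x$) arrow of $S_k$ through this rule converts it into a vertical arrow in $\Hom_{D\text{-}}(S_k, D)$, producing a string graph of the same length $2k+2$. Matching this against the graph of $N_k$ under a suitable correspondence (roughly, $f_i$ goes to the $i$-th upper vertex of $N_k$ and $g_i$ to the vertex immediately below or to the left) identifies the two bimodules.

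The main obstacle is the bookkeeping in $(2)$: the four string bimodules $W_k$, $S_k$, $N_k$, $M_k$ have graphs of identical size that differ only by which end is a valley and which directions the arrows point, so one must pin down the vertex correspondence carefully to ensure the dualization of $S_k$ lands on $N_k$ rather than on one of the other three (or on a mirrored version). Once the labelling is chosen coherently, each action computation is immediate, and the naturality of tensor-hom assembles the local isomorphism of bimodules into the claimed natural adjunction.
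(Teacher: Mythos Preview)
Your reduction is the same as the paper's: both argue that $S_k$ is free of rank $k+1$ as a left $D$-module, so the right adjoint of $S_k\otimes_D-$ is $\Hom_{D\text{-}}(S_k,D)\otimes_D-$, and the task becomes identifying $\Hom_{D\text{-}}(S_k,D)$ with $N_k$ as a $D$-$D$-bimodule. Your step~(1) and the paper's are identical.

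For step~(2) the routes diverge. You carry out a direct basis computation: take the dual basis $f_1,\dots,f_{k+1}$, compute the left and right $x$-actions explicitly, and read off the resulting string diagram as that of $N_k$. This is correct and entirely elementary, with the cost being exactly the bookkeeping you flag. The paper instead argues structurally: since $D$ is symmetric, $\Hom_{D\text{-}}(S_k,D)\cong S_k^{*}$ is indecomposable of dimension $2(k+1)$ and right-projective, which leaves only $N_k$, the band bimodules $B_{k+1}(\lambda)$, and (for $k=1$) $D\otimes_\Bbbk D$ as candidates; then a short exact sequence $0\to S_{k-1}\to S_k\to S_0\to 0$ dualizes to exhibit $N_0$ as a subbimodule, ruling out the bands, with the case $k=1$ recovered from $k=2$. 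Your approach avoids the classification of indecomposable bimodules and the inductive exact-sequence argument; the paper's approach avoids tracking an explicit identification of basis vectors. Both are sound.
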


\begin{proof}
By \cite[Lemma 13]{MZ2}, it is enough to show that $S_k$ is projective as a left $D$-module, and that $\Hom _{D-}(S_k,D)\simeq N_k$ as $D$-$D$-bimodules. As a left module, $S_k$ is a direct sum  of $k+1$ copies of the left regular module ${}_DD$. This also implies that $\Hom _{D-}(S_k,D)$ is projective as a right module. Moreover
\begin{align*}
\dim \Hom _{D-}(S_k,D) =\dim \Hom _{D-}(D^{\oplus k+1},D)=(k+1)\dim \End _{D-}(D) = 2(k+1) .
\end{align*}
Note that $D$ is a symmetric algebra and 
thus ${}_DD_D\cong {}_DD^*_D$. Hence,
by adjunction, we get
\begin{align*}
\Hom _{D-}(S_k,D) & \simeq  \Hom _{D-}(S_k,\Hom_\Bbbk (D,\Bbbk) ) \simeq   \Hom _{\Bbbk}(D\otimes_D S_k,\Bbbk)  \simeq \Hom _{\Bbbk}(S_k,\Bbbk)
\end{align*}
so that $\Hom _{D-}(S_k,D)$ and $S_k^\ast$ are isomorphic as $D$-$D$-bimodules. 
Since $S_k$ is indecomposable as a $D$-$D$-bimodule, so is $\Hom _{D-}(S_k,D)$.

The indecomposable, right projective, $2(k+1)$-dimensional $D$-$D$-bimodules are:
\begin{itemize}
\item $N_k$,
\item $B_{k+1}(\lambda)$,
\item $D\otimes_{\Bbbk} D$ (in the case $k=1$).
\end{itemize}
To show that $\Hom _D(S_k,D)\simeq N_k$, note first that
\begin{align*}
\Hom_{D-} (S_0,D) = \Hom_{D-} (D\otimes_{\Bbbk}\Bbbk ,D)\simeq \Hom_{\Bbbk} \big( \Bbbk, \Hom_D(D,D) \big) \simeq \Hom_{\Bbbk} (\Bbbk ,D),
\end{align*}
so it is clear that $\Hom_{D-} (S_0,D) \simeq N_0 = \Bbbk \otimes _{\Bbbk} D $, as $D$-$D$-bimodules. Now, for any $k\geq 1$, there is a short exact sequence of $D$-$D$-bimodules
\begin{align*}
0\to S_{k-1} \to S_k \to S_0 \to 0.
\end{align*}
Apply the functor $\Hom_{D-} (- ,D)$ to this sequence. As the regular $D$-$D$-bimodule is injective as a left module, this functor is exact. 
Therefore 
we get a short exact sequence of $D$-$D$-bimodules
\begin{equation}\label{eq:ses}
0\to \Hom_{D-} (S_0 ,D) \to \Hom_{D-} (S_k ,D) \to \Hom_{D-} (S_{k-1} ,D) \to 0.
\end{equation}
Hence $\Hom_{D-} (S_0 ,D)\simeq N_0$ is a submodule of any $\Hom_{D-} (S_k ,D)$, implying that $\Hom_{D-} (S_k ,D)$ is not a band bimodule. This proves the statement for $k\neq 1$. Moreover, by setting $k=2$ in \eqref{eq:ses}, we see that $\Hom_{D-} (S_1 ,D)$ is the quotient of $\Hom_{D-} (S_2 ,D)\simeq N_2$ by $\Hom_{D-} (S_0 ,D)\simeq N_0$, that is
\begin{align*}
\Hom_{D-} (S_1 ,D) \simeq N_2/N_0 \simeq N_1.
\end{align*}
This concludes the proof.
\end{proof}

The following statement is an adjustment of \cite[Theorem~3.1]{Zi2} 
to a slightly more general setting, into which simple transitive 
2-representations of $\mathscr{D}$ with apex $\mathcal{J}_k$, 
where $k\geq 1$, fit.

\begin{theorem}\label{thm_projective}
Let $\mathscr{C}$ be a 2-category with finitely many objects
and such that each $\mathscr{C}(\mathtt{i},\mathtt{j})$ is
$\Bbbk$-linear, idempotent split and has finite dimensional
spaces of $2$-mor\-phisms. 
Let $\mathbf{M}$ be a finitary simple transitive 2-representation of $\mathscr{C}$ such that the apex $\mathcal{J}$ of $\mathbf{M}$ is finite. Assume that $F\in \mathcal{J}$. Then the following holds.
\begin{enumerate}[$($i$)$]
\item For every object $X$ in any $\overline{\mathbf{M}}(\mathtt{i})$, the object $\overline{\mathbf{M}}(F)X$ is projective.
\item If $\overline{\mathbf{M}}(F)$ is left exact, then $\overline{\mathbf{M}}(F)$ is a projective functor.
\end{enumerate}
\end{theorem}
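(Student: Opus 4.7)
The plan is to adapt \cite[Theorem~3.1]{Zi2} from the fiat setting to the present finitary one. In the Mazorchuk--Miemietz abelianization $\overline{\mathbf{M}}(\mathtt{i})$, every object $X$ is represented by a morphism $f\colon P_1 \to P_0$ between objects inherited from $\mathbf{M}(\mathtt{i})$, with $X=\mathrm{cok}(f)$, and $X$ is projective in $\overline{\mathbf{M}}(\mathtt{i})$ precisely when $f$ is a split monomorphism onto a direct summand of $P_0$. Since $F\in\mathcal{J}$ sends indecomposables in $\mathbf{M}(\mathtt{i})$ to direct sums of indecomposables in $\mathbf{M}(\mathtt{j})$, both $\overline{\mathbf{M}}(F)P_0$ and $\overline{\mathbf{M}}(F)P_1$ are already projective. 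Hence Part~(i) reduces to the claim that $\overline{\mathbf{M}}(F)f$ is a split monomorphism for every such $f$. By additivity I may take $P_0,P_1$ indecomposable; the case of $f$ an isomorphism is trivial, so the heart of the matter is the case $f\in\mathrm{rad}\,\mathrm{Hom}_{\mathbf{M}(\mathtt{i})}(P_1,P_0)$.

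For the radical case, the argument should combine three ingredients: (a)~$1$-morphisms in cells strictly above $\mathcal{J}$ annihilate $\mathbf{M}$, by upward closure of the annihilator and the maximality of $\mathcal{J}$; (b)~the finiteness of $\mathcal{J}$ yields a distinguished $1$-morphism $G=\bigoplus_H H$ summed over representatives of iso classes in $\mathcal{J}$, so that compositions $F\circ G$ and $G\circ F$ decompose entirely within $\mathcal{J}$ modulo the annihilator from (a); and (c)~simple transitivity pins down the structure of $\overline{\mathbf{M}}(G)$ uniformly across objects and provides the uniformity needed to transport splittings. In the fiat context of \cite{Zi2} these are replaced by direct use of a biadjoint $F^*$, which translates the splitting question for $\overline{\mathbf{M}}(F)f$ into a more manageable question about the cell structure of $F^*F$ acting on $P_0$; in the present non-fiat setting I would instead derive the splitting by playing (a)--(c) against one another, exploiting simple transitivity to rule out any non-split radical contribution in $\overline{\mathbf{M}}(F)f$, and using the action matrix $[F]$ on the projective generator to force $\overline{\mathbf{M}}(F)f$ to factor through a direct-summand inclusion.

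For Part~(ii), once (i) is in hand, the left exactness hypothesis combines with automatic right exactness to make $\overline{\mathbf{M}}(F)$ exact. By Eilenberg--Watts applied to the finite-dimensional algebras underlying $\overline{\mathbf{M}}(\mathtt{i})$ and $\overline{\mathbf{M}}(\mathtt{j})$, $\overline{\mathbf{M}}(F)$ is naturally isomorphic to tensoring with some bimodule $N$. Exactness forces $N$ to be right-projective, while the projectivity from (i) of $N\otimes L$ for each simple $L$ forces $N$ to be left-projective. Hence $N$ is a projective bimodule and $\overline{\mathbf{M}}(F)$ is a projective functor in the usual sense.

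I expect the main obstacle to lie in the radical case of Part~(i): the biadjointness that drives the argument in \cite{Zi2} is unavailable, and replacing it by a purely combinatorial argument using only finiteness of $\mathcal{J}$, the apex property, and simple transitivity is the core technical burden of the adaptation, while Step~1, the reduction to the indecomposable radical case, and Part~(ii) are expected to be formal.
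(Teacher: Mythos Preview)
Your proposal rests on a misconception about \cite{Zi2}. You write that \cite[Theorem~3.1]{Zi2} is proved in the fiat setting using a biadjoint $F^{\star}$, and that the core burden here is to remove that biadjointness. In fact \cite[Theorem~3.1]{Zi2} is already stated and proved for \emph{finitary} (not fiat) $2$-categories; there is no biadjoint in that argument to eliminate. The ingredients (a)--(c) you list are essentially the content of Zimmermann's own proof, so what you are outlining is a re-derivation of \cite[Theorem~3.1]{Zi2} rather than the adaptation the present theorem requires.

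The actual ``slightly more general setting'' is only that $\mathscr{C}$ itself is not assumed finitary: it may have infinitely many isomorphism classes of indecomposable $1$-morphisms. The paper's proof is therefore a one-liner. Since the apex $\mathcal{J}$ is finite by hypothesis, the additive closure of $\mathcal{J}$ together with the identity $1$-morphisms forms a genuinely finitary $2$-full $2$-subcategory $\mathscr{C}'\subset\mathscr{C}$; the restriction of $\mathbf{M}$ to $\mathscr{C}'$ is still simple transitive with the same apex, and \cite[Theorem~3.1]{Zi2} applies to it verbatim. Your Part~(ii) argument via Eilenberg--Watts is fine but likewise already contained in \cite{Zi2}, so it too is subsumed by the restriction step.
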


\begin{proof}
We can restrict to the finitary $2$-subcategory of 
$\mathscr{C}$ given by the identities and the apex and
then apply \cite[Theorem~3.1]{Zi2}.
\end{proof}

\begin{cor}
Let $\mathbf{M}$ be a simple transitive 2-representation 
of $\mathscr{D}$ with apex $\mathcal{J}_k$, where $k\geq 1$.
Then the functor $\overline{\mathbf{M}}(N_k)$ is 
a projective functor (in the sense that it is given by tensoring
with a projective bimodule over the underlying algebra
of the 2-representation).
\end{cor}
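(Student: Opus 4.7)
The plan is to apply Theorem~\ref{thm_projective}(ii) directly to the $1$-morphism $N_k$. For this, I need to verify the two hypotheses: that $N_k$ lies in the apex (which is immediate, since $N_k \in \mathcal{J}_k$ by hypothesis on $\mathbf{M}$), and that $\overline{\mathbf{M}}(N_k)$ is left exact. The whole content of the proof is thus the second point.

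To establish left exactness, I would invoke Lemma~\ref{adj}, which tells us that in $\mathscr{D}$ the pair $(S_k \otimes_D -,\, N_k \otimes_D -)$ is an adjoint pair of endofunctors of $D\text{-mod}$. The unit and counit of this adjunction are realized by honest $2$-morphisms in $\mathscr{D}$ satisfying the triangle identities. Applying the $2$-functor $\overline{\mathbf{M}}$ (which strictly preserves $1$- and $2$-composition) then yields an adjunction between $\overline{\mathbf{M}}(S_k)$ and $\overline{\mathbf{M}}(N_k)$ on the abelianizations, with $\overline{\mathbf{M}}(N_k)$ the right adjoint. A right adjoint between abelian categories is automatically left exact, so $\overline{\mathbf{M}}(N_k)$ is left exact.

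With both hypotheses verified, Theorem~\ref{thm_projective}(ii) gives that $\overline{\mathbf{M}}(N_k)$ is a projective functor, which is exactly the claim. I do not expect any real obstacle here: the only subtlety is checking that the adjunction in $\mathscr{D}$ is genuinely internal (given by $2$-morphisms in the category and not just an equivalence of hom-sets), but this is built into the construction of $\mathscr{D}$ since its $2$-morphisms are all bimodule homomorphisms and the standard adjunction maps for tensor-Hom are of that form.
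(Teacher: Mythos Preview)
Your proof is correct and follows exactly the same approach as the paper's: invoke Lemma~\ref{adj} to get the adjunction, deduce left exactness of $\overline{\mathbf{M}}(N_k)$, and then apply Theorem~\ref{thm_projective}(ii). Your version is in fact more detailed than the paper's (which simply says ``From Lemma~\ref{adj} it follows that $\overline{\mathbf{M}}(N_k)$ is left exact''), since you spell out that the adjunction is internal to $\mathscr{D}$ and is therefore preserved by the $2$-functor $\overline{\mathbf{M}}$.
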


\begin{proof}
From Lemma~\ref{adj} it follows that 
$\overline{\mathbf{M}}(N_k)$ is left exact. 
Therefore we may apply Theorem~\ref{thm_projective}
and the claim follows.
\end{proof}

\section{Combinatorial results}\label{s1p}

Fix a simple transitive 2-representation \textbf{M} of 
$\mathscr{D}$ with apex $\mathcal{J}_k$, where $k\geq 1$. Let $B$ be a basic associative $\Bbbk$-algebra for which \textbf{M}(\texttt{i}) is equivalent to $B$-proj. Let $1=\varepsilon_1+\ldots +\varepsilon_r$ be a decomposition of the identity in $B$ into a sum of pairwise orthogonal primitive idempotents. Denote by $P_i$ the $i$'th indecomposable projective left $B$-module $B\varepsilon_i$, and denote by $L_i$ its simple top.

The aim of this section is to prove the following.

\begin{prop}\label{prop.matrix}
Let $\mathbf{M}$ be a simple transitive 2-representation of $\mathscr{D}$ with apex $\mathcal{J}_k$, where $k\geq 1$. Then the action matrices of indecomposable 1-morphisms in $\mathcal{J}_k$ are, up to renumbering of projective objects in $\mathbf{M}(\mathtt{i})$, either all equal to $[1]$ or
\begin{align*}
[N_k]=[W_k]=\begin{bmatrix}
1 & 1\\ 0 & 0
\end{bmatrix}, \quad
[M_k]=[S_k]=\begin{bmatrix}
0 & 0\\ 1 & 1
\end{bmatrix}.
\end{align*}
\end{prop}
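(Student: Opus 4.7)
My plan is to combine a combinatorial rank reduction obtained from Perron--Frobenius applied to $T := [W_k] + [S_k] + [N_k] + [M_k]$ with the algebraic constraints coming from the projective-functor structure of $\overline{\mathbf{M}}(N_k)$ and the adjunction $(\mathbf{M}(S_k), \mathbf{M}(N_k))$. First I translate the multiplication table \eqref{eqtable} into matrix identities: since the strictly larger two-sided cells are annihilated by $\mathbf{M}$, the abbreviated table holds on the nose, and writing $w := [W_k]$, $s := [S_k]$, $n := [N_k]$, $m := [M_k]$, these non-negative integer matrices are idempotent and satisfy the full set of sixteen product relations. Expanding $T^2$ and noting that each of $w, s, n, m$ appears exactly four times among the sixteen products yields $T^2 = 4T$. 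Transitivity forces $T$ to have strictly positive entries, so by Perron--Frobenius its spectral radius is a simple eigenvalue; but $T^2 = 4T$ pins every non-zero eigenvalue of $T$ to $4$, so $\mathrm{rank}(T) = 1$. Combined with $\mathrm{tr}(T) = \mathrm{rank}(w) + \mathrm{rank}(s) + \mathrm{rank}(n) + \mathrm{rank}(m)$ and the fact that none of $w, s, n, m$ can vanish (the cross-relations would propagate the vanishing to all, contradicting $T > 0$), each of $w, s, n, m$ has rank exactly $1$.

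Next I parametrise the rank-$1$ structure. Using $ws = w$, $sw = s$, $wn = n$, $nw = w$ and their analogues, a short non-negativity argument shows that one can choose non-negative integer column vectors $u, u'$ and row vectors $v, v'$ with
\[
w = uv, \quad s = u'v, \quad n = uv', \quad m = u'v', \qquad vu = vu' = v'u = v'u' = 1.
\]
Each of the four scalar identities, being a sum of non-negative integers equal to $1$, picks out a unique pivot index; call these $p_w, p_s, p_n, p_m$. A support analysis (using that any $i$ with $u_i > 0$ must have $v_i > 0$ or $v'_i > 0$, together with the transitivity requirement $v + v' > 0$) yields $\mathrm{supp}(u) \subseteq \{p_w, p_n\}$, $\mathrm{supp}(u') \subseteq \{p_s, p_m\}$, $\mathrm{supp}(v) \subseteq \{p_w, p_s\}$, $\mathrm{supp}(v') \subseteq \{p_n, p_m\}$, each with value $1$ at every listed position. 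Transitivity then reads $(u+u')(v+v') > 0$ entrywise and forces $\{1, \ldots, r\} = \{p_w, p_s, p_n, p_m\}$, so $r \leq 4$.

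To cut $r \leq 4$ down to the two cases in the statement I combine the Corollary preceding this section with Lemma~\ref{adj}. Writing $\overline{\mathbf{M}}(N_k) \cong \bigoplus_{i, j}(B\varepsilon_i \otimes_\Bbbk \varepsilon_j B)^{a_{ij}}$ gives $n = AC$ with $A = (a_{ij})$ non-negative integer and $C$ the Cartan matrix of $B$, while the adjunction yields $s^T = CA$ via $\mathrm{Hom}(\mathbf{M}(S_k) P_i, L_j) \cong \mathrm{Hom}(P_i, \mathbf{M}(N_k) L_j)$ and the projective-functor formula for $\mathbf{M}(N_k) L_j$. Since $C_{jj} \geq 1$ for every $j$, any zero row of $n = AC$ forces the corresponding row of $A$ to vanish, so $A$ is supported in rows indexed by $\mathrm{supp}(u)$. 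Reading the identities $AC = uv'$ and $CA = v^T(u')^T$ at positions where $v$ is non-zero but $v'$ is zero (and vice versa), and invoking $C_{jj} \geq 1$ to force contradictions whenever a required Cartan diagonal entry would have to vanish, a short case check on the pivot configurations eliminates every possibility except $p_w = p_s = p_n = p_m$ (giving $r = 1$ and all matrices equal to $[1]$) and $p_w = p_n \neq p_s = p_m$ (giving, after renumbering the two projectives, the displayed matrices).

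The main obstacle is the asymmetry of the given setup: only $(\mathbf{M}(S_k), \mathbf{M}(N_k))$ is identified as an adjoint pair in Lemma~\ref{adj}, so no dual projective-functor identity is directly available for $[M_k]$ or $[W_k]$. Consequently, the ``symmetric'' support constraint needed to force $p_s = p_m$ (not merely $p_w = p_n$) must be extracted by hand from $AC = n$, $CA = s^T$, and $C_{jj} \geq 1$, which is the most delicate step of the argument and where the $r = 3$ pivot configurations in particular must be ruled out by a short but careful computation.
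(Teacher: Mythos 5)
Your argument is correct, and it reaches the conclusion by a genuinely different combinatorial route than the paper. The paper also derives $[F]^2=4[F]$ for $F=W_k+S_k+N_k+M_k$, but then invokes the classification of irreducible quasi-idempotent non-negative integer matrices from \cite{TZ}, and runs a case-by-case elimination over the nine resulting candidate matrices (of sizes $1$ through $4$) using image/kernel constraints from the cell structure together with a key lemma (Lemma~\ref{SN}): a zero column of $[N_k]$ forces the corresponding row of $[S_k]$ to vanish, and vice versa, both consequences of the adjunction $(S_k,N_k)$ via $[[N_k]]=[S_k]^{\mathrm{tr}}$. You avoid \cite{TZ} entirely: Perron--Frobenius plus $T^2=4T$ gives $\operatorname{rank}T=1$ directly, and your pivot parametrization $w=uv$, $s=u'v$, $n=uv'$, $m=u'v'$ with $vu=vu'=v'u=v'u'=1$ replaces the matrix enumeration by a support analysis; this is arguably cleaner and more self-contained. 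Your final constraint is the same mathematics as Lemma~\ref{SN} in disguise --- your identities $n=AC$, $s^{T}=CA$ with $C_{jj}\geq 1$ yield exactly the two support inclusions $\operatorname{supp}(u)\subseteq\operatorname{supp}(v)$ and $\operatorname{supp}(u')\subseteq\operatorname{supp}(v')$, which together with the normalizations $vu=vu'=v'u=v'u'=1$ pin the pivot configuration to either all four coinciding or $p_w=p_n\neq p_s=p_m$; so the ``delicate'' step you worry about is actually short, and the asymmetry of the single adjunction is harmless because both implications of Lemma~\ref{SN} come from that one adjunction. Two places are only sketched and should be written out: (a) the existence of the integral non-negative factorization with \emph{common} $u$, $u'$, $v$, $v'$ --- this is easiest done by taking $u$ and $v$ to be the pivot column and row of $w$, then $v'$ the pivot row of $n$ and $u'$ the pivot column of $s$, and observing that $m=sn=u'(vu)v'=u'v'$ comes for free from the multiplication table; and (b) the route through the projective-functor Corollary is heavier than needed, since the paper obtains the same zero-column/zero-row constraints from $[[N_k]]=[S_k]^{\mathrm{tr}}$ and right exactness alone, without the Cartan-matrix factorization.
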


In particular, Proposition~\ref{prop.matrix} implies Theorem~\ref{mainthm}(ii).
The remainder of this section is devoted to the
proof of Proposition~\ref{prop.matrix}.

\begin{lemma}\label{SN}
\begin{enumerate}[$($i$)$]
\item If the matrix $[N_k]$ has a zero column, then the corresponding row in $[S_k]$ must be zero.
\item If the matrix $[S_k]$ has a zero column, then the corresponding row in $[N_k]$ must be zero.
\end{enumerate}
\end{lemma}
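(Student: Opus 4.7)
The plan is to exploit the adjunction $(S_k\otimes_D -,\, N_k\otimes_D -)$ from Lemma~\ref{adj} together with the projectivity statement from Theorem~\ref{thm_projective}(i). First I would observe that the unit and counit witnessing this adjunction are $2$-morphisms in $\mathscr{D}$, so applying the (strict) $2$-functor $\overline{\mathbf{M}}$ produces an adjunction
\begin{equation*}
\Hom_{B}(\overline{\mathbf{M}}(S_k)X,\,Y)\;\cong\;\Hom_{B}(X,\,\overline{\mathbf{M}}(N_k)Y)
\end{equation*}
for all $X,Y\in\overline{\mathbf{M}}(\mathtt{i})\simeq B\text{-mod}$. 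Combined with Theorem~\ref{thm_projective}(i) applied to $S_k,N_k\in\mathcal{J}_k$, this tells me that $\overline{\mathbf{M}}(S_k)P_i$ and $\overline{\mathbf{M}}(N_k)P_i$ are projective for every $i$, so each can be decomposed as a direct sum of the $P_l$, and its multiplicities are precisely the entries of $[S_k]$, respectively $[N_k]$.

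For statement (i), I would unpack the hypothesis as $\overline{\mathbf{M}}(N_k)P_j=0$. The adjunction then yields $\Hom_B(\overline{\mathbf{M}}(S_k)P_i,P_j)\cong\Hom_B(P_i,\overline{\mathbf{M}}(N_k)P_j)=0$ for every $i$. Writing $\overline{\mathbf{M}}(S_k)P_i=\bigoplus_l b_{li}P_l$ and using additivity of $\Hom$, the vanishing $\bigoplus_l b_{li}\Hom_B(P_l,P_j)=0$ combined with $\End_B(P_j)=\varepsilon_j B\varepsilon_j\neq 0$ forces $b_{ji}=0$. Running this for all $i$ gives that the $j$-th row of $[S_k]$ is zero.

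For (ii), I would argue symmetrically: the hypothesis is $\overline{\mathbf{M}}(S_k)P_j=0$, and the adjunction now gives $\Hom_B(P_j,\overline{\mathbf{M}}(N_k)P_i)\cong\Hom_B(\overline{\mathbf{M}}(S_k)P_j,P_i)=0$ for every $i$. Decomposing $\overline{\mathbf{M}}(N_k)P_i=\bigoplus_l c_{li}P_l$ and using once more that $\Hom_B(P_j,P_j)\neq 0$, I conclude $c_{ji}=0$ for all $i$, i.e.\ the $j$-th row of $[N_k]$ vanishes.

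There is no serious obstacle here: the only conceptual point that needs checking is the clean transfer of the adjunction from $D\text{-mod-}D$ to $\overline{\mathbf{M}}(\mathtt{i})$, which is formal once Lemma~\ref{adj} is phrased in terms of unit/counit $2$-morphisms. After that, the proof reduces to a direct computation with Hom spaces between indecomposable projectives, and the symmetry between (i) and (ii) is a direct consequence of the two-sided nature of the adjunction combined with the projectivity conclusion of Theorem~\ref{thm_projective}(i) applying to both $S_k$ and $N_k$.
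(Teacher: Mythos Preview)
Your argument is correct, but it follows a different route from the paper. The paper does not apply the adjunction at the level of $\Hom$ between projectives; instead it invokes \cite[Lemma~10]{MM5} to obtain the matrix identity $[[N_k]]=[S_k]^{\mathrm{tr}}$ (where $[[N_k]]$ records composition multiplicities of $\overline{\mathbf{M}}(N_k)$ on simples, using that $N_k$ acts exactly). Part~(i) then follows because $N_kP_j=0$ forces $N_kL_j=0$, i.e.\ column $j$ of $[[N_k]]$ vanishes, hence row $j$ of $[S_k]$ vanishes; part~(ii) is read off from the transpose identity in the other direction. Your approach stays entirely on the projective side and uses the adjunction directly on $\Hom$-spaces, which makes the symmetry between (i) and (ii) completely transparent and avoids the detour through simples and the external lemma. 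The paper's route, on the other hand, plugs into a general principle relating $[[F]]$ and $[G]^{\mathrm{tr}}$ for adjoint pairs that is reused elsewhere.

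One small remark: your appeal to Theorem~\ref{thm_projective}(i) is not needed. Since $\mathbf{M}$ is a finitary $2$-representation with $\mathbf{M}(\mathtt{i})\simeq B\text{-proj}$, the objects $\mathbf{M}(S_k)P_i$ and $\mathbf{M}(N_k)P_i$ are projective by definition, and $\overline{\mathbf{M}}$ agrees with $\mathbf{M}$ on these. So the decomposition $\overline{\mathbf{M}}(S_k)P_i=\bigoplus_l P_l^{b_{li}}$ is automatic.
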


\begin{proof}
By Lemma~\ref{adj}, the functor $N_k$ is exact. 
By \cite[Lemma 10]{MM5}, we have $[[N_k]]=[S_k]^{\mathrm{tr}}$.
If column $i$ in the matrix $[N_k]$ is zero, then $N_kP_i=0$. As $L_i$ is the top of $P_i$,  the object $N_kL_i$ must be zero as well. This proves (i). On the other hand, if column $i$ of $[S_k]$ is zero, then row $i$ of $[S_k]^{\mathrm{tr}}=[[N_k]]$ is zero. This means that nothing in the image of $N_k$ can have $L_i$ as a simple subquotient. In particular, $P_i$ cannot occur in the image of $N_k$, and so row $i$ of $[N_k]$ must be zero. This proves (ii).
\end{proof}

Note that $W_k$, $S_k$, $N_k$ and $M_k$  are all idempotent modulo strictly greater two-sided cells. 
Setting $F=W_k + S_k + N_k + M_k$ yields $F\otimes F=F^{\oplus 4}$. Hence the action matrix of $F$ must be an irreducible positive integer matrix satisfying $[F]^2=4[F]$. The set of such matrices are classified in \cite{TZ}. They are, up to permutations of rows and columns, the following.
\begin{align*}
\begin{bmatrix}
4
\end{bmatrix} ,\
\begin{bmatrix}
2 & 2\\
2 & 2\\
\end{bmatrix} ,\ 
\begin{bmatrix}
3 & 3\\
1 & 1\\
\end{bmatrix} ,\
\begin{bmatrix}
3 & 1\\
3 & 1\\
\end{bmatrix} ,\
\begin{bmatrix}
2 & 4\\
1 & 2\\
\end{bmatrix} ,\
\begin{bmatrix}
2 & 1\\
4 & 2\\
\end{bmatrix} ,\end{align*}
\begin{align*}
\begin{bmatrix}
2 & 1 & 1\\
2 & 1 & 1\\
2 & 1 & 1\\ 
\end{bmatrix} ,\
\begin{bmatrix}
2 & 2 & 2\\
1 & 1 & 1\\
1 & 1 & 1\\ 
\end{bmatrix} ,\
\begin{bmatrix}
1&1&1&1\\
1&1&1&1\\
1&1&1&1\\
1&1&1&1\\
\end{bmatrix}.
\end{align*}
Since the functors $\mathbf{M} (W_k)$, $\mathbf{M}(S_k)$, $\mathbf{M}(N_k)$ and $\mathbf{M}(M_k)$ are all idempotent, their action matrices are
 idempotent
as well. The rank of an idempotent matrix equals 
its
trace. The trace of $[F]$ is 4, so the action matrices $[W_k],[S_k],[N_k],[M_k]$ must all have trace and rank 1. The action matrices also inherit left, right and two-sided preorders and equivalences, so we speak of these notions for 1-morphisms and action matrices interchangeably. Directly from the multiplication table we can also conclude the following about the action matrices.
\begin{itemize}
\item If $A\sim_RB$, then $AB=B$ and $BA=A$. This also implies
\begin{align*}
\im (B)&=\im (AB) \subseteq \im (A) \\
\im (A)&=\im (BA) \subseteq \im (B),
\end{align*}
so that $\im A=\im B$. For matrices of rank 1 this means that all nonzero columns of $A$ and $B$ are linearly dependent.
\item If $A\sim_LB$, then $AB=A$ and $BA=B$. This also implies
\begin{align*}
\ker (A)&=\ker (AB) \supseteq \ker (B) \\
\ker (B)&=\ker (BA) \supseteq \ker (A),
\end{align*}
so that $\ker A=\ker B$. Hence $A$ and $B$ have the same zero columns.
\end{itemize}

\begin{lemma} \label{poss.eq}
\begin{enumerate}[(i)]
\item $[W_k]=[S_k]$ if and only if $[N_k]=[M_k]$.
\item $[W_k]=[N_k]$ if and only if $[S_k]=[M_k]$.
\item If $[W_k]=[M_k]$ or $[S_k]=[N_k]$, then $[W_k]=[S_k]=[N_k]=[M_k]$.
\end{enumerate}
\end{lemma}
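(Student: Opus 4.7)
The plan is to derive each equivalence as a direct consequence of the multiplication table \eqref{eqtable}. Restricted to the apex $\mathcal{J}_k$, this table gives relations of the form $[FG] = [F][G]$ among the action matrices $[W_k], [S_k], [N_k], [M_k]$, all of which are rank-one idempotents by the preceding discussion. The strategy for each part is to substitute the hypothesized equality into a carefully chosen pair of these relations whose left-hand sides become identical, and read the conclusion off the right-hand sides.

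For part (i), the forward implication uses $[W_k][M_k] = [N_k]$ together with $[S_k][M_k] = [M_k]$: under $[W_k] = [S_k]$ the left-hand sides coincide, so $[N_k] = [M_k]$. The reverse implication is symmetric, using $[N_k][W_k] = [W_k]$ against $[M_k][W_k] = [S_k]$. Part (ii) is handled the same way: the forward implication compares $[S_k][W_k] = [S_k]$ with $[S_k][N_k] = [M_k]$, and the reverse implication compares $[W_k][S_k] = [W_k]$ with $[W_k][M_k] = [N_k]$.

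For part (iii), I first suppose $[W_k] = [M_k]$. Then
\begin{equation*}
[N_k] = [W_k][M_k] = [M_k][M_k] = [M_k],
\end{equation*}
so $[W_k] = [N_k]$, and an application of part (ii) yields $[S_k] = [M_k]$, forcing all four matrices to coincide. The case $[S_k] = [N_k]$ is symmetric: the computation $[M_k] = [S_k][N_k] = [N_k][N_k] = [N_k]$ gives $[S_k] = [M_k]$, and part (i) then forces $[W_k] = [N_k]$ as well.

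The whole proof is combinatorial bookkeeping with the multiplication table, so no real obstacle arises; the only point requiring care is, in each implication, to pick the specific pair of table entries whose left-hand sides collapse under the given hypothesis and whose right-hand sides produce the asserted equality.
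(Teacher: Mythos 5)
Your proof is correct and follows essentially the same approach as the paper: in each part you substitute the hypothesized equality into suitably chosen products from the multiplication table \eqref{eqtable} and read off the conclusion. The particular table entries you pick differ in one or two places from the paper's choices, but the argument is the same combinatorial bookkeeping.
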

\begin{proof}
If $[W_k]=[S_k]$, then
\begin{align*}
[N_k]=[W_k][N_k]=[S_k][N_k]=[M_k].
\end{align*}
On the other hand, if $[N_k]=[M_k]$, then
\begin{align*}
[W_k]=[N_k][W_k]=[M_k][W_k]=[S_k].
\end{align*}
This proves claim $(i)$; claim $(ii)$ is similar.
Finally, if $[W_k]=[M_k]$, then
\begin{align*}
[W_k] =[W_k][W_k] = [W_k][M_k]=[N_k]
\end{align*}
and
\begin{align*}
[W_k] =[W_k][W_k] = [M_k][W_k]=[S_k].
\end{align*}
This proves one of the implications in $(iii)$, the other is similar.
\end{proof}

In particular, Lemma~\ref{poss.eq} implies that, if the matrix $[F]$ has 1 as 
 an entry, then the matrices
$[W_k]$, $[S_k]$, $[N_k]$ and $[M_k]$ are all different.

We now do a case-by-case analysis depending on the rank of the 2-representation (i.e.  the size of action matrices).

\subsection{Rank 1}
If $F=[4]$, then $[W_k] = [S_k] = [N_k] = [M_k]= [1]$.

\subsection{Rank 2}
Consider first the case
$F\in \left\lbrace \begin{bmatrix}
3 & 3\\
1 & 1\\
\end{bmatrix}, \begin{bmatrix}
3 &1\\
3&1\\
\end{bmatrix} \right\rbrace$. Since $F$ has entries equal to 1, the action matrices of $W_k$, $S_k$, $N_k$ and $M_k$ must all be different. They all have trace 1 and their sum has diagonal $(3,1)$, so we must have four different matrices with non-negative integer entries:
\begin{align*}
A=\begin{bmatrix}
1 & a\\
b & 0\\
\end{bmatrix},\
B=\begin{bmatrix}
1 & c\\
d & 0\\
\end{bmatrix},\
C=\begin{bmatrix}
1 & e\\
f & 0\\
\end{bmatrix},\
G=\begin{bmatrix}
0 & g\\
h & 1\\
\end{bmatrix}.
\end{align*}
Two of those with diagonal $(1,0)$, say $A$ and $B$, must belong to the same left cell. Then $AB=A$, i.e.
\begin{align*}
A=\begin{bmatrix}
1 & a\\
b & 0\\
\end{bmatrix} = AB =
\begin{bmatrix}
1+ad & c\\
d & bc\\
\end{bmatrix}
\end{align*}
which implies $a=c$ and $b=d$, so that $A=B$, a contradiction.

Assume now
$F\in \left\lbrace \begin{bmatrix}
2&4\\
1&2\\
\end{bmatrix},\begin{bmatrix}
2&1\\
4&2\\
\end{bmatrix},
\begin{bmatrix}
2&2\\
2&2\\
\end{bmatrix} \right\rbrace $.  Then
$W_k$, $S_k$, $N_k$ and $M_k$ will be given by the following matrices:
\begin{align*}
A=\begin{bmatrix}
1 & \ast\\
\ast & 0\\
\end{bmatrix},\
B=\begin{bmatrix}
1 & \ast\\
\ast & 0\\
\end{bmatrix},\
C=\begin{bmatrix}
0 & \ast\\
\ast & 1\\
\end{bmatrix},\
G=\begin{bmatrix}
0 & \ast\\
\ast & 1\\
\end{bmatrix}.
\end{align*}
We see that $AB,BA\in \{ A,B\}$. This implies that either $A\sim_LB$ or $A\sim_RB$.

If $A\sim_LB$, then $A\not\sim_RB$, so we can assume $A\sim_RC$ and $B\sim_RG$. By comparing images, and using that all ranks are 1, we get
\begin{align*}
A=B=\begin{bmatrix}
1&0\\
1&0\\
\end{bmatrix},\
C=G=\begin{bmatrix}
0&1\\
0&1\\
\end{bmatrix}.
\end{align*}
$A\sim_LB$ and $C\sim_LG$ tells us that left equivalent functors are represented by the same matrix. By symmetry we can set
\begin{align*}
[N_k]=[M_k]=\begin{bmatrix}
1&0\\
1&0\\
\end{bmatrix} \quad \text{and} \quad [S_k]=[W_k]=\begin{bmatrix}
0&1\\
0&1\\
\end{bmatrix}.
\end{align*}
However, now the second column of $[N_k]$ is zero, but the second row of $[S_k]$ is nonzero. This contradicts Lemma~\ref{SN}(i), so we discard this case.

If, instead, $A\sim_RB$ and $C\sim_RG$, we can assume $A\sim_LC$ and $B\sim_LG$. Since the first column of $A$ is nonzero, so is the first column of $C$. At the same time, the second column of $C$ is nonzero, so the second column of $A$ is as well. Together with ranks being 1 and right equivalences, this yields
\begin{align*}
A=B=\begin{bmatrix}
1&1\\
0&0\\
\end{bmatrix}\ \text{and} \
C=G=\begin{bmatrix}
0&0\\
1&1\\
\end{bmatrix}.
\end{align*}
By symmetry we can set
\begin{align*}
[N_k]=[W_k]=\begin{bmatrix}
1&1\\
0&0\\
\end{bmatrix}\ \text{and} \
[M_k]=[S_k]=\begin{bmatrix}
0&0\\
1&1\\
\end{bmatrix}.
\end{align*}

\subsection{Rank 3}
$F\in \left\lbrace \begin{bmatrix}
2 & 1 & 1\\
2 & 1 & 1\\
2 & 1 & 1\\ 
\end{bmatrix} ,\
\begin{bmatrix}
2 & 2 & 2\\
1 & 1 & 1\\
1 & 1 & 1\\ 
\end{bmatrix} \right\rbrace $.
Either choice of the matrix of $F$ has 1 as entry, so all of $[W_k]$, $[S_k]$, $[N_k]$ and $[M_k]$ have to be different. As the diagonal of $F$ is $(2,1,1)$, they must be represented by idempotent matrices $A,B,C,G$, all of rank 1, as follows.
\begin{align*}
A=\begin{bmatrix}
1 && \ast \\
 &0& \\
\ast && 0 \\
\end{bmatrix},\
B=\begin{bmatrix}
1 && \ast \\
 &0& \\
\ast && 0 \\
\end{bmatrix},\
C=\begin{bmatrix}
0 && \ast \\
 &1& \\
\ast && 0 \\
\end{bmatrix},\
G=\begin{bmatrix}
0 && \ast \\
 &0& \\
\ast && 1 \\
\end{bmatrix}.
\end{align*}
Note that $AB,BA\in \{ A,B\}$, so $A$ and $B$ are either left or right equivalent. We consider these two cases separately.

Assume first $A\sim_L B$, $C\sim_LG$, so that $C$ and $G$ have the same kernel. Hence the third column of $C$ and the second of $G$ are nonzero. Since the ranks are 1 we get
\begin{align*}
C=\begin{bmatrix}
0 & & \ast\\
 & 1 & 1\\
\ast & 0 & 0 
\end{bmatrix},\
G=\begin{bmatrix}
0 & & \ast\\
 & 0 & 0\\
\ast & 1 & 1 
\end{bmatrix}.
\end{align*}
Taking into account that the lower  right submatrix of $F$ has all entries 1, this implies
\begin{align*}
A=\begin{bmatrix}
1 & & \ast\\
 & 0 & 0\\
\ast & 0 & 0 
\end{bmatrix},\
B=\begin{bmatrix}
1 & & \ast\\
 & 0 & 0\\
\ast & 0 & 0 
\end{bmatrix}.
\end{align*}
Since $A\sim_LB$,  we have $A\not\sim_RB$. We can thus assume $A\sim_RC$ and $B\sim_RG$. Then $A$ and $C$ have the same image, and $B$ and $G$ have the same image, so that
\begin{align*}
A=\begin{bmatrix}
1 & 0 & 0\\
1 & 0 & 0\\
0 & 0 & 0 
\end{bmatrix},\
B=\begin{bmatrix}
1 & 0 & 0\\
0 & 0 & 0\\
1 & 0 & 0 
\end{bmatrix},\
C=\begin{bmatrix}
0 & 1 & 1\\
0 & 1 & 1\\
0 & 0 & 0 
\end{bmatrix},\
G=\begin{bmatrix}
0 &1  & 1\\
0 & 0 & 0\\
0 & 1 & 1 
\end{bmatrix}.
\end{align*}
Then $\{ S,N\}$ is either $\{ A,G\}$ or $\{ B,C\}$. Any such choice contradicts Lemma~\ref{SN}(ii).

Assume now $A\sim_RB$  and $C\sim_RG$, so that $C$ and $G$ have the same image. Then
\begin{align*}
C=\begin{bmatrix}
0 & & \ast \\
& 1 & 0\\
\ast & 1 & 0
\end{bmatrix},\
G=\begin{bmatrix}
0 & & \ast \\
& 0 & 1\\
\ast & 0 & 1
\end{bmatrix} .
\end{align*}
By considering the lower right $2\times 2$-submatrix of $F$, we conclude
\begin{align*}
A=\begin{bmatrix}
1 & & \ast\\
 & 0 & 0\\
\ast & 0 & 0 
\end{bmatrix},\
B=\begin{bmatrix}
1 & & \ast\\
 & 0 & 0\\
\ast & 0 & 0 
\end{bmatrix}.
\end{align*}
$A\sim_RB$ implies $A\not\sim_LB$, so we can assume $A\sim_LC$ and $B\sim_LG$. Using now  that left equivalence means common kernel, together with all ranks being 1, we get
\begin{align*}
A=\begin{bmatrix}
1 & 1 & 0\\
0 & 0 & 0\\
0 & 0 & 0 
\end{bmatrix},\
B=\begin{bmatrix}
1 & 0 & 1\\
0 & 0 & 0\\
0 & 0 & 0 
\end{bmatrix},\
C=\begin{bmatrix}
0 & 0 & 0 \\
1 & 1 & 0\\
1 & 1 & 0
\end{bmatrix},\
G=\begin{bmatrix}
0 & 0 & 0 \\
1  & 0 & 1\\
1 & 0 & 1
\end{bmatrix} .
\end{align*}
Then $\{ S,N\}$ is either $\{ A,G\}$ or $\{ B,C\}$. Any such choice contradicts Lemma~\ref{SN}(i).

\subsection{Rank 4}
Assume that $W_k$, $S_k$, $N_k$ and $M_k$ are given by
\begin{align*}
A=\begin{bmatrix}
1 &&& \ast \\
& 0 &&  \\
&& 0 & \\
\ast &&& 0
\end{bmatrix},\
B=\begin{bmatrix}
0 &&& \ast \\
& 1 &&  \\
&& 0 & \\
\ast &&& 0
\end{bmatrix},\
C=\begin{bmatrix}
0 &&& \ast \\
& 0 &&  \\
&& 1 & \\
\ast &&& 0
\end{bmatrix},\
G=\begin{bmatrix}
0 &&& \ast \\
& 0 &&  \\
&& 0 & \\
\ast &&& 1
\end{bmatrix}.
\end{align*}
As all entries of $F$ are 1, we have that, 
for each position $(i,j)$, one of $A,B,C,G$ 
has entry 1 at this position, while the others 
have entry 0 at this position. 

We can, without loss of generality, assume 
that $A\sim_RB$, $C\sim_RG$, $A\sim_LC$ and $B\sim_LG$. This gives us immediately
\begin{align*}
A=\begin{bmatrix}
1 & 0&1& 0 \\
1 & 0 &1&0  \\
0&0& 0 &0 \\
0 &0 &0& 0
\end{bmatrix},\
B=\begin{bmatrix}
0 &1 &0& 1 \\
0& 1 &0&1  \\
0&0& 0 &0 \\
0 &0&0& 0
\end{bmatrix},\
C=\begin{bmatrix}
0 &0&0& 0 \\
0& 0 &0& 0 \\
1&0& 1 & 0 \\
1 &0&1& 0
\end{bmatrix},\
G=\begin{bmatrix}
0 &0&0& 0 \\
0& 0 &0& 0 \\
0&1& 0 &1 \\
0 &1&0& 1
\end{bmatrix}.
\end{align*}
Then $\{S_k,N_k\}$ is either $\{ A,G\}$ or $\{ B,C\}$. Any such choice contradicts Lemma~\ref{SN}.
This completes the proof of Proposition~\ref{prop.matrix}.

\section{Each simple transitive $2$-representation of rank $2$ is cell}\label{sec_rank2_cell}

Fix a simple transitive 2-representation \textbf{M} of 
$\mathscr{D}$ with apex $\mathcal{J}_k$, where $k\geq 1$. 

Let $\mathcal{L}$ be the left cell $\{N_k,M_k\}$.
As seen in Proposition~\ref{prop.matrix}, the action matrices of $\mathbf{M}(U_k)$, where $U_k\in \mathcal{J}_k$, are as follows:
\begin{displaymath}
[N_k]=[W_k]=\begin{bmatrix}
1 & 1\\ 0 & 0
\end{bmatrix}, \qquad\qquad
[M_k]=[S_k]=\begin{bmatrix}
0 & 0\\ 1 & 1
\end{bmatrix}.
\end{displaymath}

Let us see what this says about the basic algebra $B$
 underlying $\overline{\mathbf{M}}(\mathtt{i})$. 
The rank is two, so we have a decomposition 
$1=\varepsilon_1+\varepsilon_2$ of 
the identity in $B$
into primitive orthogonal idempotents. Denote by $P_1=B\varepsilon_1$ and $P_2=B\varepsilon_e$ the indecomposable projective left $B$-modules, and by $L_1,L_2$ their respective simple tops. Then, for $i=1,2$,
 we have
\begin{displaymath}
\mathbf{M}(N_k)P_i\simeq \mathbf{M}(W_k)P_i\simeq P_1
\qquad\text{ and }\qquad
\mathbf{M}(S_k)P_i\simeq \mathbf{M}(M_k)P_i\simeq P_2.
\end{displaymath}
Moreover, since
\begin{align*}
[[N_k]]=[S_k]^t=\begin{bmatrix}
0 & 1\\ 0 & 1
\end{bmatrix},
\end{align*}
we have $\overline{\mathbf{M}}(N_k)L_1=0$ and $\overline{\mathbf{M}}(N_k)L_2$ has simple subquotients $L_1, L_2$.
By Theorem~\ref{thm_projective}, it follows that $\overline{\mathbf{M}}(N_k)L_1$ must be isomorphic to a number of copies of $P_1$. Therefore we see that $\overline{\mathbf{M}}(N_k)L_1\simeq P_1$, and $P_1$ has length 2 with socle $L_2$.
In the underlying quiver of $B$ this means that we have exactly one arrow $\alpha$ from 1 to 2, and no loops at 1. If there is an arrow $\beta$ from 2 to 1 then $\beta \alpha =0$. Moreover, if there is a loop $\gamma$ at 2 then  $\gamma\alpha =0$:
\begin{equation}\label{eq22}
\xymatrix{
1\ar@/^1pc/[r]^{\alpha} & 2\ar@/^1pc/@{.>}[l]^{\beta} \ar@(ur,dr)@{.>}^\gamma
}
\end{equation}
This also yields
\begin{align*}
\dim (\varepsilon_1B\varepsilon_1) &= \dim \Hom_B(P_1,P_1) = 1\\
\dim (\varepsilon_2B\varepsilon_1) &= \dim \Hom_B(P_2,P_1) = 1.
\end{align*}
Since $\overline{\mathbf{M}} (N_k)$ is exact and only has $P_1$ in its image, it must be of the form
\begin{align*}
\overline{\mathbf{M}} (N_k)\simeq B\varepsilon_1 \otimes \varepsilon_1 B^{\oplus a} \oplus B\varepsilon_1 \otimes \varepsilon_2 B^{\oplus b}, 
\end{align*}
for some nonnegative integers $a$ and $b$. Since
\begin{align*}
\overline{\mathbf{M}} (N_k) L_1 =0,
\end{align*}
we must have $a=0$. Then 
\begin{align*}
\overline{\mathbf{M}} (N_k) L_2 \simeq P_1
\end{align*}
implies that $b=1$, so that
\begin{align*}
\mathbf{M}(N_k)\simeq B\varepsilon_1\otimes \varepsilon_2B\otimes_B-.
\end{align*}
As seen in Lemma~\ref{adj}, $(S_k,N_k)$ is an adjoint pair, 
so this also gives 
\begin{align*}
\mathbf{M}(S_k)\simeq B\varepsilon_2\otimes (B\varepsilon_1)^{\ast}\otimes_B-,
\end{align*}
cf. \cite[Subsection~7.3]{MM1}.
Again, using that $(S_k,N_k)$ is an adjoint pair, yields
\begin{align*}
\dim (\varepsilon_2B\varepsilon_2) &=\dim \Hom_B (P_2,P_2)=\\
&=\dim \Hom_B (\mathbf{M}(S_k)P_1,P_2) =\\
&=\dim \Hom_B (P_1,\mathbf{M}(N_k)P_2) =\\
&= \dim \Hom_B (P_1,P_1) =\\
&= 1.
\end{align*}
In the quiver \eqref{eq22}, this rules out loops at 2. 
Moreover, it implies
\begin{align*}
\mathbf{M}(W_k) \simeq \mathbf{M}(N_k)\mathbf{M}(S_k) \simeq B\varepsilon_1 \otimes (B\varepsilon_1)^{\ast}\otimes_B-.
\end{align*}
Because $W_k$ is idempotent, $\dim \big( (B\varepsilon_1)^{\ast}\otimes_B B\varepsilon_1\big) =1$. Hence, it follows that
\begin{align*}
\mathbf{M}(M_k)=\mathbf{M}(S_k)\mathbf{M}(N_k)=B\varepsilon_2\otimes \varepsilon_2B\otimes_B-.
\end{align*}
Consider now $(B\varepsilon_1)^{\ast}$.  As seen above, $P_1=B\varepsilon_1$ has Jordan-H{\"o}lder series  $L_1$, $L_2$, so $(B\varepsilon_1)^{\ast}$ has top $L_2^\ast$ and socle $L_1^\ast$ 
(these are simple right $B$-modules). This
implies that $(B\varepsilon_1)^{\ast}$
is exactly the projective right module $\varepsilon_2B$. Hence we conclude
\begin{align*}
\mathbf{M}(N_k)\simeq  \mathbf{M}(W_k)&\simeq B\varepsilon_1\otimes \varepsilon_2B\otimes_B-\\
\mathbf{M}(S_k)\simeq \mathbf{M}(M_k)&\simeq  B\varepsilon_2\otimes \varepsilon_2B\otimes_B-.
\end{align*}

We have that the Cartan matrix of $\mathbf{M}$ is
\begin{align*}
\begin{bmatrix}
1 & c\\
1 & 1
\end{bmatrix}
\end{align*}
where $c=\dim \Hom_B (P_1,P_2)$ remains unknown.

Since $\dim \Hom_B(P_2,P_2)=1$, and $P_1$ has Jordan H\"{o}lder series $L_1,L_2$, we must have a short exact sequence
\begin{align*}
L_1^{\oplus c} \xrightarrow{g} P_2 \to L_2.
\end{align*}
In the quiver \eqref{eq22}, this 
corresponds to the fact that we have exactly $c$ arrows $\beta_1,\ldots ,\beta_c :2\to 1$ and the relations 
\begin{align*}
\alpha\beta_i=0=\beta_i \alpha .
\end{align*}

Let us sum up what we know so far:
\begin{itemize}
\item $P_1$ has basis $\{ \varepsilon_1,\alpha\}$,
\item $P_2$ has basis $\{ \varepsilon_2,\beta_1,\dots,\beta_c\}$,
\item $\mathrm{Hom}_B(P_1,P_2)$ has a basis $\{ f_1,\ldots ,f_c\}$ where $f_i(\alpha )=0$ and $f_i(\varepsilon_1)=\beta_i$.
\end{itemize}
However, all functors above are of the form
\begin{align*}
\overline{\mathbf{M}}(U)= B\varepsilon_i\otimes \varepsilon_2B\otimes_B-.
\end{align*}
The module $\varepsilon_2B$ has basis $\{\varepsilon_2,\alpha\}$, and, as seen above, we have
\begin{align*}
\alpha\beta_i=0=\varepsilon_2\beta_i .
\end{align*}
Thus, for $U\in \mathcal{J}_k$, we have
\begin{align*}
\overline{\mathbf{M}}(U)(f_i)(\varepsilon_1 )=0,
\end{align*}
so that $\overline{\mathbf{M}}(U)(f_i)=0$. But then the $f_i$'s generate a proper $\mathscr{D}$-invariant ideal in $\mathbf{M}(\mathtt{i})$. By simplicity of $\mathbf{M}$, this ideal is $\{ 0\}$. Thus $c=0$ and the Cartan matrix is
\begin{align*}
\begin{bmatrix}
1 & 0\\
1 & 1
\end{bmatrix}.
\end{align*}

The rest of the proof now goes as in e.g. 
\cite[Proposition~9]{MM5}
or \cite[Subsection~4.9]{MaMa}.
Consider the principal 2-representation $\mathbf{P}_\mathtt{i}$ and the subrepresentation $\mathbf{N}$ with $\mathbf{N}(\mathtt{i})=\mathrm{add}\{ F\mid F\geq_L \mathcal{L}\}$. Recall that there is a unique maximal ideal $\mathbf{I}$ in $\mathbf{N}$ such that $\mathbf{N}/\mathbf{I}\simeq \mathbf{C}_\mathcal{L}$. The map
\begin{align*}
\Phi : \mathbf{P}_\mathtt{i}&\to \overline{\mathbf{M}} \\
\mathbbm{1}_\mathtt{i}&\mapsto L_2
\end{align*}
extends to a 2-natural transformation by the Yoneda Lemma, \cite[Lemma 9]{MM2}. Since
\begin{displaymath}
\overline{\mathbf{M}}(N_k)L_2=P_1\quad\text{ and }\quad
\overline{\mathbf{M}}(M_k)L_2=P_2,
\end{displaymath}
$\Phi$ induces a 2-natural transformation $\Psi :\mathbf{N}\to \overline{\mathbf{M}}_\mathrm{proj}$.
Note that $\overline{\mathbf{M}}_\mathrm{proj}$ is equivalent to $\mathbf{M}$. By uniqueness of the maximal ideal $\mathbf{I}$ the kernel of $\Psi$ is contained in $\mathbf{I}$, so $\Psi$ factors through  $\mathbf{C}_\mathcal{L}$. On the other hand, the Cartan matrices of $\mathbf{M}$ and $\mathbf{C}_\mathcal{L}$ coincide. Consequently, $\Psi$ induces an equivalence of 2-representations between $\mathbf{C}_\mathcal{L}$ and $\mathbf{M}$.

This proves Theorem~\ref{mainthm}(iii).

\section{A simple transitive $2$-representation 
of rank $1$ with apex $\mathcal{J}_1$}\label{s3p}

Recall that we have the two-sided cell $\mathcal{J}_{0}$ containing only the 1-morphism $M_0$. We have $\mathcal{J}_{\Bbbk\text{-split}}\geq_J\mathcal{J}_0\geq_J\mathcal{J}_1$. The cell $\mathcal{J}_0$ is not idempotent, since
\begin{align*}
M_0\otimes_D M_0 \simeq D\otimes D \oplus \Bbbk .
\end{align*}
However, for all $U\in \mathcal{J}_1$, we have
\begin{align*}
U\otimes _DM_0\simeq M_0 \oplus V,
\end{align*}
where all indecomposable direct summands of $V$ are $\Bbbk$-split. Since $\mathcal{J}_0$ contains only one element, it is also a left cell. Therefore the cell 2-representation $\mathbf{C}_{\mathcal{J}_0}$ is a simple transitive 2-representation of $\mathscr{D}$ with apex $\mathcal{J}_1$. Note that the matrix describing the action of each 1-morphism in $\mathcal{J}_1$ is $[1]$, agreeing with  Proposition~\ref{prop.matrix}.

This proves Theorem~\ref{mainthm}(iv)
and thus completes the proof of Theorem~\ref{mainthm}.

\section{(Co-) Duflo 1-morphisms}\label{sdufloo}

\subsection{2-morphisms to and from $\mathbbm{1}_\mathtt{i}$}\label{sectionhom}

\subsubsection{String bimodules}\label{s_hom_string}

In what follows we will need a more detailed description of
string bimodules. We will index the basis elements of 
$M_k$ and $W_k$ as follows: 
\begin{displaymath}
\xymatrix@=12pt{
m_1 & m_2\ar[l]\ar[d]\\
& m_3 &  \ar[l]\ddots \ar[d]\\
&& m_{2k+1} & m_{2k+2}\ar[l]\ar[d] \\
&&& m_{2k+3}}\qquad\qquad
\xymatrix@=12pt{
w_1\ar[d] \\
w_2 & w_3\ar[l]\ar[d]\\
&{\ddots} &w_{2k-1} \ar[l] \ar[d]\\
&&w_{2k} & w_{2k+1}\ar[l]
}
\end{displaymath}
With this convention, we have $N_k\simeq M_k/ \mathrm{span}\{ m_{2k+3} \}$, $S_k\simeq M_k/\mathrm{span}\{ m_1 \}$ and $W_k\simeq M_k/ \mathrm{span}\{ m_1,m_{2k+3} \}$.

\begin{lemma}\label{lemhomto1}
Let $k$ be a positive integer.
\begin{enumerate}[$($i$)$]
\item The only element of $\mathcal{J}_k$ admitting a $D$-$D$-bimodule morphism to $\mathbbm{1}_\mathtt{i}$ which does not factor through
the simple bimodule is $M_k$.
\item The only element of $\mathcal{J}_k$ admitting a $D$-$D$-bimodule morphism $\mathbbm{1}_\mathtt{i}\to U_k$ which does not factor 
through the simple bimodule is $W_k$.
\end{enumerate}
\end{lemma}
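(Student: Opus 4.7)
My plan is to prove both parts by direct computation of the respective $\mathrm{Hom}$-spaces, using the explicit bases from Subsection~\ref{s_hom_string}. The simple $D$-$D$-bimodule is $\Bbbk=D/(x)$; it embeds into $\mathbbm{1}_\mathtt{i}={}_DD_D$ with image the socle $\Bbbk x$. A morphism $\phi:U\to\mathbbm{1}_\mathtt{i}$ factors through the simple iff $\mathrm{im}(\phi)\subseteq\Bbbk x$, and a morphism $\psi:\mathbbm{1}_\mathtt{i}\to U$ factors through the simple iff $x\cdot\psi(1)=0$. So the question reduces, in each case, to whether the bimodule-compatibility equations force these ``factoring'' conditions.

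For part (i), I would start with $M_k$. Each odd-indexed basis element $m_{2j+1}$ is annihilated by $x$ on both sides, so $\phi(m_{2j+1})\in\mathrm{ann}_D(x)=\Bbbk x$; write $\phi(m_{2j+1})=a_{2j+1}x$. Applying the relations $m_{2i}\cdot x=m_{2i-1}$ and $x\cdot m_{2i}=m_{2i+1}$ to $\phi(m_{2i})=b_{2i}+c_{2i}x$ forces $b_{2i}=a_{2i-1}=a_{2i+1}$, so all $a$'s collapse to a single scalar $a$; any choice $a\neq 0$ then produces a morphism whose image escapes $\Bbbk x$. I would then check that for $N_k=M_k/\langle m_{2k+3}\rangle$ the new relation $x\cdot m_{2k+2}=0$ forces $b_{2k+2}=a_{2k+1}=0$, so by propagation $a=0$. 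The analogous boundary argument works from the other end for $S_k=M_k/\langle m_1\rangle$ and from both ends for $W_k$, ruling out non-trivial morphisms in all three cases.

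For part (ii), I would represent $\psi:D\to U$ by the element $u=\psi(1)\in U$ satisfying $xu=ux$, and note that $\psi$ factors through the simple iff $xu=0$. For $W_k$, the explicit element $u=w_1+w_3+\cdots+w_{2k+1}$ should give $xu=ux=w_2+w_4+\cdots+w_{2k}\neq 0$. For $M_k$, writing $u=\sum c_im_i$ and computing
\begin{displaymath}
xu-ux=\sum_{i=1}^{k+1}c_{2i}(m_{2i+1}-m_{2i-1})
\end{displaymath}
shows that $m_1$ appears only on the $-ux$ side and $m_{2k+3}$ only on the $xu$ side, immediately forcing $c_2=c_{2k+2}=0$; the intermediate matching relations $c_{2i}=c_{2i+2}$ then propagate to kill every even coefficient, giving $xu=0$. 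For $S_k$ and $N_k$ one of the two boundary terms disappears, but the surviving one still triggers the same collapse.

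The main obstacle is really just careful bookkeeping with indices and with the three quotient relations. Conceptually, the content of the lemma is that $M_k$ and $W_k$ play dual roles with respect to the regular bimodule: $M_k$ has two odd-indexed endpoints that create a free parameter for morphisms \emph{into} $\mathbbm{1}_\mathtt{i}$, while $W_k$ has two ``peak'' odd-indexed endpoints that create a free parameter for morphisms \emph{out of} $\mathbbm{1}_\mathtt{i}$; the asymmetrically truncated $S_k$ and $N_k$ admit no such non-trivial interaction in either direction beyond the simple.
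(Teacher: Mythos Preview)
Your proposal is correct and follows essentially the same approach as the paper: both arguments are direct computations of the relevant $\Hom$-spaces using the explicit string bases, reducing everything to linear constraints among the coefficients and then propagating a boundary zero across the string. The only organizational difference is that the paper treats $W_k$ (and, by analogy, $S_k$ and $N_k$) with its own basis $\{w_j\}$ and runs the propagation $\varphi(w_1)\in\Bbbk x\Rightarrow\varphi(w_2)=0\Rightarrow\varphi(w_3)\in\Bbbk x\Rightarrow\dots$ directly, whereas you parametrize morphisms out of $M_k$ once and then kill the free parameter using the extra quotient relation(s) coming from the identifications $N_k,S_k,W_k\simeq M_k/\langle\cdots\rangle$; the content is identical.
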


\begin{proof}
The regular bimodule
$\mathbbm{1}_\mathtt{i}\simeq {}_DD_D$ has 
standard
basis $\{ 1,x\}$.

There is a $D$-$D$-bimodule morphism $\varphi_k :M_k\to \mathbbm{1}_\mathtt{i}$ given by
\begin{align*}
\varphi_k (m_j)=\begin{cases} 1, & j\ \text{even} \\ x,& j\ \text{odd}\end{cases} .
\end{align*}
That is, $\varphi_k$ maps standard basis elements from $\mathrm{rad}(M_k)$ to $x\in {}_DD_D$, and the rest of the standard basis elements to 1. We prove that any  $D$-$D$-bimodule morphism $\varphi :W_k\to \mathbbm{1}_\mathtt{i}$ factors through the simple bimodule, and similar arguments for $S_k$ and $N_k$ complete the proof of part (i). 
Assume that $\varphi :W_k\to \mathbbm{1}_\mathtt{i}$ is a $D$-$D$-bimodule morphism. 
Consider the standard basis vector $w_1$. Since $w_1x=0$ we must have $\varphi (w_1)\in \mathrm{span}\{ x\}$. Thus
\begin{align*}
\varphi (w_2)=\varphi (xw_1) = x\varphi(w_1)=0.
\end{align*}
As $w_2=w_3x$ this, in turn, implies $\varphi (w_3)\in \mathrm{span}\{ x\}$ and so on. We will have $\varphi (w_{2j})=0$ for all $j$, i.e. $\varphi$ annihilates $\mathrm{rad}(W_k)$. Thus $\varphi$ factors through the simple bimodule.

For  part (ii),  it is straightforward to check that 
$\psi_k:\mathbbm{1}_\mathtt{i}\to W_k$ given by
\begin{align*}
\psi_k (1)&= w_1+w_3+\ldots +w_{2k+1},\\
\psi_k (x)&= w_2+w_4+\ldots +w_{2k},
\end{align*}
is a homomorphism of $D$-$D$-bimodules.
If $\eta :\mathbbm{1}_\mathtt{i}\to M_k$ is  a bimodule morphism, then
\begin{align*}
\eta (1)=\sum_{j=1}^{2k+3} \lambda_j m_j,
\end{align*}
for some $\lambda_j\in \Bbbk$. Then
\begin{displaymath}
\eta (x) = \eta (1)x = \sum_{j=1}^{k+1} \lambda_{2j} m_{2j-1} 
=x\eta (1) =  \sum_{j=1}^{k+1} \lambda_{2j} m_{2j+1} .
\end{displaymath}
Comparing the coefficients, we conclude that $\lambda_{2j}=0$, for $j=1,\ldots ,k+1$, so that $\eta (1)\in \mathrm{rad}(M_k)$. Thus $\varphi$ factors through the simple bimodule. For $S_k$ and $N_k$,
the proof is similar.
\end{proof}

Note that $\varphi_0:M_0\to \mathbbm{1}_\mathtt{i}$
is also defined.
If  we fix integers $l\leq k$, then $\varphi_l$ factors through $\varphi_k$, and $\psi_l$ factors through $\psi_k$. Indeed,  $M_k$ has a submodule isomorphic to $M_l$ spanned by $\{ m_j\mid j=1,\ldots ,2l+3\}$.  Letting $\iota_{l,k}:M_l\to M_k$ be the inclusion of $M_l$ into $M_k$, it is clear that $\varphi_l=\varphi_k\circ \iota_{l,k}$. Similarly, denote by $\pi_{k,l}:W_k\to W_l$ the projection whose kernel is spanned by $\{ w_j\mid j\geq 2l+2\}$. Then $\psi_l=\pi_{k,l}\circ \psi_k$.

Let us now address the problem of uniqueness of $\varphi_k$ and $\psi_k$.
For a non-negative integer $k$, denote by $V_k$ the subspace
of $\mathrm{Hom}_{D\text{-}D}(M_k,D)$
consisting of all homomorphisms which factor through the
simple $D$-$D$-bimodule. 
For a positive integer $k$, denote by $\hat{V}_k$ the subspace
of $\mathrm{Hom}_{D\text{-}D}(D,W_k)$
consisting of all homomorphisms which factor through the
simple $D$-$D$-bimodule. 

\begin{cor}\label{cordufcor}
{\hspace{1mm}}

\begin{enumerate}[$($i$)$]
\item For any non-negative integer $k$, we have
$\dim\mathrm{Hom}_{D\text{-}D}(M_k,D)/V_k=1$.
\item For any positive integer $k$, we have
$\dim\mathrm{Hom}_{D\text{-}D}(D,W_k)/\hat{V}_k=1$.
\end{enumerate}
\end{cor}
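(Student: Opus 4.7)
The approach is to compute both homomorphism spaces explicitly using the bases of $M_k$ and $W_k$ given in Subsection~\ref{s_hom_string}, and identify $V_k$ (respectively $\hat{V}_k$) as a hyperplane cut out by a single linear equation.

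For part (i), I would parametrize $\varphi\in\mathrm{Hom}_{D\text{-}D}(M_k,D)$ by its values on the basis $m_1,\dots,m_{2k+3}$. The odd-indexed basis vectors $m_{2j+1}$ are annihilated by both $x$-actions, so $\varphi(m_{2j+1})\in\mathrm{soc}(D)=\Bbbk x$. Writing $\varphi(m_{2j})=a_{2j}+b_{2j}x$ on the peaks, the relations $m_{2j}x=m_{2j-1}$ and $xm_{2j}=m_{2j+1}$ force $\varphi(m_{2j-1})=a_{2j}x=\varphi(m_{2j+1})$. Chaining these identities across $j=1,\dots,k+1$ collapses all $a_{2j}$ into a single common scalar $a$, while the $b_{2j}$ remain free. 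Now $\varphi$ factors through the simple bimodule precisely when its image lies in $\mathrm{soc}(D)$, which amounts to $a=0$. Hence $V_k$ is the hyperplane $\{a=0\}$, giving $\dim\mathrm{Hom}_{D\text{-}D}(M_k,D)/V_k\leq 1$. Lemma~\ref{lemhomto1}(i) produces $\varphi_k$ with $a=1$, forcing equality.

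Part (ii) follows by the parallel analysis. A homomorphism $\psi:D\to W_k$ is determined by $\psi(1)=\sum_j c_j w_j$, and the bimodule condition $x\psi(1)=\psi(1)x$ (both equal $\psi(x)$) forces $c_1=c_3=\cdots=c_{2k+1}=:c$, with the $c_{2j}$ free. The morphism $\psi$ factors through the simple iff $\psi(1)\in\mathrm{soc}(W_k)=\mathrm{span}\{w_2,w_4,\ldots,w_{2k}\}$, i.e.\ iff $c=0$. Thus $\hat{V}_k$ is again a hyperplane, and Lemma~\ref{lemhomto1}(ii) provides $\psi_k\notin \hat{V}_k$.

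The main obstacle is purely bookkeeping: extracting the correct incidence pattern of the $x$-actions from the graphs of $M_k$ and $W_k$ to justify the ``chain'' that collapses all peak values to one free parameter. Once this is done, the proof is a short linear algebra calculation combined with the existence of the explicit generators $\varphi_k$ and $\psi_k$ from Lemma~\ref{lemhomto1}.
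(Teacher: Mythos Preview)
Your proof is correct and follows essentially the same approach as the paper's. Both arguments parametrize a homomorphism by its values on the basis, use the alternating left/right $x$-actions at the peaks $m_{2j}$ (respectively $w_{2j-1}$) to force all peak values to agree modulo the socle, and then observe that factoring through the simple bimodule is exactly the vanishing of this common scalar; the paper phrases the chain argument by working modulo $\Bbbk\langle x\rangle$ from the outset, while you write out the coefficients $a_{2j},b_{2j}$ explicitly, but the content is identical.
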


\begin{proof}
Assume that $\varphi\in \mathrm{Hom}_{D\text{-}D}(M_k,D)\setminus V_k$.
Then $\varphi(m_2)\in D\setminus\Bbbk\langle x\rangle$, in particular,
$x\varphi(m_2)=\varphi(xm_2)=\varphi(m_3)\neq 0$.
Using the right action of $x$, we have 
$\varphi(m_3)=\varphi(m_4x)=\varphi(m_4)x$, which uniquely determines
the image of $\varphi(m_4)$ in $D/\Bbbk\langle x\rangle$.
Similarly, the image of each $\varphi(m_i)$, where $i$ is even, in 
$D/\Bbbk\langle x\rangle$ is uniquely determined. As 
$\Bbbk\langle x\rangle\subset D$ is a simple $D$-$D$-bimodule,
claim (i) follows. Claim (ii) is proved similarly.
\end{proof}

\subsubsection{Band bimodules}

From the definition of band bimodules, it follows
directly that, for all $n\geq 2$, there are short exact sequences of $D$-$D$-bimodules
\begin{align*}
0\to B_1(1)\xrightarrow{\alpha_n} B_n(1) \to B_{n-1}(1)\to 0
\end{align*}
and
\begin{align*}
0\to B_{n-1}(1)\to B_n(1)\xrightarrow{\beta_n} B_1(1) \to 0.
\end{align*}
It is a technical but not difficult exercise to verify that,
for any $n$ and $k$,  the morphism $\varphi_k$ factors through $\beta_n$, and the morphism $\alpha_n$ factors through $\psi_k$.

\subsection{Duflo 1-morphisms in fiat $2$-categories}

Following \cite{MM1}, recall that a finitary $2$-category  
$\mathscr{C}$ is called {\em fiat} if it has a weak
involution $\star$ such that each pair $(F,F^{\star})$
of $1$-morphisms is an adjoint pair via some choice
of adjunctions morphisms between the compositions $FF^{\star}$,
$F^{\star}F$ and the relevant identities.

Let $\mathscr{C}$ be a fiat 2-category and $\mathcal{L}$ a 
left cell in $\mathscr{C}$. Let $\mathtt{i}=\mathtt{i}_\mathcal{L}$ 
be the object such that all 1-morphisms in $\mathcal{L}$ start 
in $\mathtt{i}$. A 1-morphism $G\in \mathcal{L}$ is called a 
{\em Duflo 1-morphism}  for $\mathcal{L}$,
cf. \cite[Subsection~4.5]{MM1}, if the indecomposable 
projective module $P_{\mathbbm{1}_\mathtt{i}}$ in 
$\overline{\mathbb{P}}_\mathtt{i}(\mathtt{i})$ has a submodule 
$K$ such that
\begin{enumerate}
\item $P_{\mathbbm{1}_\mathtt{i}}/K$ is annihilated by all $F\in \mathcal{L}$,
\item there is a surjective morphism $P_G\to K$.
\end{enumerate}
By \cite[Proposition 17]{MM1}, any left cell in a fiat 2-category $\mathscr{C}$ has a unique Duflo 1-morphism. These Duflo 1-morphisms 
play a major role in the construction of cell 2-representations,
cf. \cite{MM1}. 

\subsection{Duflo 1-morphisms for finitary $2$-categories}
\label{s1-duf}

The paper \cite{Zh} gives a different definition of the notion
of Duflo $1$-morphisms which is also applicable for 
general finitary $2$-categories. One significant difference
with \cite{MM1} is that,
in the general case, Duflo $1$-morphisms in the sense of
\cite{Zh} do not have to exist, and if they exist, they do not have
to belong to the left cell they are associated to.
Below we propose yet another alternative.

Let $\mathscr{C}$ be a finitary $2$-category, 
$\mathcal{L}$ a left cell in $\mathscr{C}$
and $\mathtt{i}=\mathtt{i}_\mathcal{L}$ the object such
that all $1$-morphisms in $\mathcal{L}$ start at it.

\begin{definition}\label{def_great}
{\hspace{1mm}}

\begin{enumerate}[(i)]
\item A 1-morphism $G$ in $\mathscr{C}$ is \emph{good} for $\mathcal{L}$ if there is a 2-morphism $\varphi :G\to \mathbbm{1}_\mathtt{i}$ such that $F\varphi :FG\to F$ is right split, for any $F\in \mathcal{L}$
 (i.e. there is $\xi:F\to FG$ such that 
$F\varphi\circ_v\xi=\mathrm{id}_F$).
\item A 1-morphism $G$ in $\mathscr{C}$ is  \emph{great} for $\mathcal{L}$ if it is good for $\mathcal{L}$, and, for any $G'$ with $\varphi ':G'\to \mathbbm{1}_\mathtt{i}$ which is also good for $\mathcal{L}$, there is a 2-morphism $\beta :G\to G'$ such that $\varphi =\varphi '\circ \beta$.
\end{enumerate}
\end{definition}

\begin{rmk}
That $\mathscr{C}$ is finitary is not necessary for to state Definition~\ref{def_great}.
\end{rmk}

For fiat $2$-categories, the following proposition
relates the latter notion to that of Duflo $1$-morphisms.

\begin{prop}\label{pduf-fin}
Let $\mathscr{C}$ be a fiat 2-category and $\mathcal{L}$ a left cell in $\mathscr{C}$. Then $G\in \mathcal{L}$ is great for $\mathcal{L}$ if and only if $G$ is the Duflo 1-morphism of $\mathcal{L}$.
\end{prop}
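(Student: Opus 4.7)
The plan is to work entirely in the abelianized principal $2$-representation $\overline{\mathbf{P}}_\mathtt{i}$, identifying each $2$-morphism $\varphi\colon G\to\mathbbm{1}_\mathtt{i}$ with its induced map $\varphi_{*}\colon P_{G}\to P_{\mathbbm{1}_\mathtt{i}}$ and packaging its content into the submodule $K_{\varphi}:=\mathrm{im}(\varphi_{*})\subseteq P_{\mathbbm{1}_\mathtt{i}}$. The key observation driving both directions is: since $\overline{\mathbf{P}}_\mathtt{i}(F)$ is right exact and sends projectives to projectives, the $2$-morphism $F\varphi$ is right split in $\mathscr{C}$ if and only if $(F\varphi)_{*}\colon P_{FG}\to P_{F}$ is a split epimorphism, which in turn holds if and only if $F(P_{\mathbbm{1}_\mathtt{i}}/K_{\varphi})=0$ (the splitting is free once surjectivity is known, by projectivity of $P_{F}$).

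For the direction $(\Leftarrow)$, let $G\in\mathcal{L}$ be the Duflo $1$-morphism, with distinguished $K\subseteq P_{\mathbbm{1}_\mathtt{i}}$ and surjection $P_{G}\twoheadrightarrow K$. Composing with $K\hookrightarrow P_{\mathbbm{1}_\mathtt{i}}$ gives $\varphi_{*}$ and hence a $2$-morphism $\varphi\colon G\to\mathbbm{1}_\mathtt{i}$ with $K_{\varphi}=K$; the Duflo condition (1) together with the key observation yields goodness. For universality, let $(G',\varphi')$ be any good pair; the key observation, applied in reverse, shows that $F(P_{\mathbbm{1}_\mathtt{i}}/K_{\varphi'})=0$ for every $F\in\mathcal{L}$. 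Granting the inclusion $K\subseteq K_{\varphi'}$, the map $\varphi_{*}$ factors through $K_{\varphi'}$, and since $P_{G}$ is projective this factorization lifts along the surjection $P_{G'}\twoheadrightarrow K_{\varphi'}$ to some $\beta_{*}\colon P_{G}\to P_{G'}$ with $\varphi'_{*}\circ\beta_{*}=\varphi_{*}$, giving the required $\beta\colon G\to G'$ satisfying $\varphi'\circ_{v}\beta=\varphi$.

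The inclusion $K\subseteq K_{\varphi'}$ is the main technical obstacle: it expresses that the Duflo submodule $K$ is contained in \emph{every} submodule of $P_{\mathbbm{1}_\mathtt{i}}$ whose quotient is annihilated by $\mathcal{L}$, i.e., that $K$ is the minimal such submodule. I would deduce this from the intrinsic construction of the Duflo morphism in \cite[Subsection~4.5]{MM1}, which produces $\varphi$ through the adjunction morphisms attached to the pairs $(F,F^{\star})$ for $F\in\mathcal{L}$, together with the uniqueness result \cite[Proposition~17]{MM1}. Finally, for the direction $(\Rightarrow)$, assume $G\in\mathcal{L}$ is great. Goodness furnishes some $\varphi\colon G\to\mathbbm{1}_\mathtt{i}$ with $F\varphi$ right split for every $F\in\mathcal{L}$; by the key observation $F(P_{\mathbbm{1}_\mathtt{i}}/K_{\varphi})=0$, which is Duflo's condition (1), and $\varphi_{*}$ factors tautologically as $P_{G}\twoheadrightarrow K_{\varphi}\hookrightarrow P_{\mathbbm{1}_\mathtt{i}}$, which is Duflo's condition (2). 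Since $G\in\mathcal{L}$ by hypothesis, $G$ satisfies the Duflo definition, and uniqueness \cite[Proposition~17]{MM1} identifies it with the Duflo $1$-morphism of $\mathcal{L}$.
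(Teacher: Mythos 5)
Your overall strategy coincides with the paper's: pass to the abelianization $\overline{\mathbf{P}}_\mathtt{i}$, encode a $2$-morphism $\varphi\colon G\to\mathbbm{1}_\mathtt{i}$ by the submodule $K_{\varphi}=\mathrm{im}(\varphi_{*})\subseteq P_{\mathbbm{1}_\mathtt{i}}$, and use exactness of the $F\in\mathcal{L}$ together with projectivity of $P_F$ to show that goodness of $(G,\varphi)$ is equivalent to $F(P_{\mathbbm{1}_\mathtt{i}}/K_{\varphi})=0$ for all $F\in\mathcal{L}$. That equivalence is correct, as is your treatment of the direction ``great $\Rightarrow$ Duflo'' (which, given the standing hypothesis $G\in\mathcal{L}$, indeed needs only goodness plus the uniqueness part of \cite[Proposition~17]{MM1}; the paper does more here because it also shows $G\in\mathcal{L}$ is forced), and the lifting of the factorization along $P_{G'}\twoheadrightarrow K_{\varphi'}$ via projectivity of $P_G$.

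The genuine gap is exactly the step you flag and then defer: the minimality $K\subseteq K_{\varphi'}$ for every good pair $(G',\varphi')$. The route you propose --- the ``intrinsic construction through adjunction morphisms'' combined with the uniqueness of the Duflo $1$-morphism --- does not close it: uniqueness asserts that only one $G\in\mathcal{L}$ admits \emph{some} admissible submodule, but says nothing about comparability of two submodules of $P_{\mathbbm{1}_\mathtt{i}}$ whose quotients are both annihilated by $\mathcal{L}$; a priori $K$ and $K_{\varphi'}$ could be incomparable. The paper closes this using \cite[Proposition~17(b)]{MM1}: $K$ is a quotient of $P_G$ with $G$ indecomposable, hence has simple top $L_G$, and $FL_G$ has simple top $L_F$, in particular $FL_G\neq 0$ for $F\in\mathcal{L}$. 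Since each $F\in\mathcal{L}$ is exact and kills $P_{\mathbbm{1}_\mathtt{i}}/K_{\varphi'}$, the simple $L_G$ cannot occur as a composition factor of $P_{\mathbbm{1}_\mathtt{i}}/K_{\varphi'}$; but any nonzero image of $K$ in that quotient would have $L_G$ in its top, so the composite $K\hookrightarrow P_{\mathbbm{1}_\mathtt{i}}\twoheadrightarrow P_{\mathbbm{1}_\mathtt{i}}/K_{\varphi'}$ vanishes and $K\subseteq K_{\varphi'}$. So the missing ingredient is part (b) of the very proposition you cite, not its uniqueness statement; with that substitution your argument becomes complete.
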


\begin{proof}
The proof goes as follows: we first prove that the Duflo 1-morphism of $\mathcal{L}$ is good for $\mathcal{L}$. Then we prove that if $G$ is great for $\mathcal{L}$, then $G$ is the Duflo 1-morphism for $\mathcal{L}$. Finally, we prove that the Duflo 1-morphism is great for $\mathcal{L}$.

Assume first that $G$ is the Duflo 1-morphism of $\mathcal{L}$. Let $K\subseteq P_{\mathbbm{1}_\mathtt{i}}$ be the submodule from the definition and $\alpha :P_G\to K$ a surjective morphism. Let $f:P_G\to P_{\mathbbm{1}_\mathtt{i}}$ be the composition $P_G\xrightarrow{\alpha} K\overset{\iota}{\hookrightarrow} P_{\mathbbm{1}_\mathtt{i}}$. 
The morphism $f$ 
is given by a morphism $\varphi :G\to \mathbbm{1}_\mathtt{i}$ 
as represented on the commutative diagram
\begin{equation}\label{eq33}
\xymatrix{
P_G\ar[d]^{f}&=&0\ar[d]\ar[r] & G\ar[d]^{\varphi} \\
P_{\mathbbm{1}_\mathtt{i}}&=&0\ar[r] & \mathbbm{1}_\mathtt{i}.
}
\end{equation}
Consider short exact sequences
\begin{displaymath}
\begin{tikzcd}
\ker\arrow[hook]{r} & P_G \arrow[->>, "\alpha"]{r} & K \\
K\arrow[hook, "\iota"]{r} & P_{\mathbbm{1}_\mathtt{i}}\arrow[->>]{r} & P_{\mathbbm{1}_\mathtt{i}}/K.
\end{tikzcd} 
\end{displaymath}
As $\mathscr{C}$ is fiat, each 
$1$-morphism of $\mathscr{C}$ acts as an exact functor on
each abelian $2$-representation of $\mathscr{C}$. Therefore
applying $F\in \mathcal{L}$ yields short exact sequences
\begin{displaymath}
\begin{tikzcd}
F\ker\arrow[hook]{r} & FP_G \arrow[->>,"F\alpha"]{r} & FK \\
FK\arrow[hook,"F\iota"]{r} & FP_{\mathbbm{1}_\mathtt{i}}\arrow[->>]{r} & F\big( P_{\mathbbm{1}_\mathtt{i}}/K \big).
\end{tikzcd}
\end{displaymath}
By assumption $F\big( P_{\mathbbm{1}_\mathtt{i}}/K \big)=0$, so $F\iota : FK\to FP_{\mathbbm{1}_\mathtt{i}}$ is an isomorphism, in particular, it is surjective. Thus $Ff= F\iota \circ F\alpha :FP_G\to F\mathbbm{1}_\mathtt{i}$ is also surjective, implying that it is right split.

By considering the right column of the diagram
\begin{align*}
\xymatrix{
0\ar[r]\ar[d] & FG \ar[d]^{F\varphi}\\
0\ar[r] & F,
}
\end{align*}
we see that $F\varphi$ is right split. Therefore $G$ is good for $\mathcal{L}$.
This completes the first step of our proof.

To prove the second step, assume that $G$ is great for $\mathcal{L}$. 
Let $\varphi :G\to \mathbbm{1}_\mathtt{i}$ be the corresponding
$2$-morphisms from the definition. This extends to a morphism $P_G\to P_{\mathbbm{1}_i}$ in $\overline{\mathbf{P}}_\mathtt{i}$ as
in \eqref{eq33}
and the submodule $K$ of $P_{\mathbbm{1}_i}$ is the image of this morphism. We now have a short exact sequence
\begin{displaymath}
\xymatrix{
0\to K\xrightarrow{\overline{f}} P_{\mathbbm{1}_\mathtt{i}} \xrightarrow{g} P_{\mathbbm{1}_\mathtt{i}} /K\to 0.
}
\end{displaymath}
Applying exact $F\in \mathcal{L}$, we get a short exact sequence
\begin{displaymath}
\xymatrix{
0\to FK\xrightarrow{F\overline{f}} P_F \xrightarrow{Fg} F\big( P_{\mathbbm{1}_\mathtt{i}} /K \big) \to 0.
}
\end{displaymath}
Note that, since $F\varphi$ is right split, the induced morphism $K\to P_F$ in $\overline{\mathbf{P}}_\mathtt{i}$ is also right split and therefore surjective. Hence $F\overline{f}:FK\to P_F$ is an isomorphism.
By exactness, we obtain  $F\big( P_{\mathbbm{1}_\mathtt{i}} /K \big)=0$.

To conclude that $G$ is the Duflo $1$-morphism
in $\mathcal{L}$, it remains to show that $G\in \mathcal{L}$. Assume that $H$ is the Duflo 1-morphism of $\mathcal{L}$, and that $K_H\subseteq P_{\mathbbm{1}_\mathtt{i}}$ is the submodule from the definition. We shall prove that $G=H$. By the above, $H$ is good for $\mathcal{L}$,
with the corresponding morphism 
$H\to \mathbbm{1}_\mathtt{i}$,
so there is a morphism $\alpha:G\to H$ making the following diagram commutative:
\begin{align*}
\xymatrix{
H\ar[r] & \mathbbm{1}_\mathtt{i} \\
G\ar[ur]_{\varphi}\ar[u]^{\alpha} 
}
\end{align*}
Therefore $K \subseteq K_H\subseteq P_{\mathbbm{1}_\mathtt{i}}$. Note that $K_H$ has simple top $L_H$. By \cite[Proposition 17(b)]{MM1}, for all $F\in \mathcal{L}$, the object $FL_{H}$ has simple top $L_F$, in particular $FL_H\neq 0$. Since $F(P_{\mathbbm{1}_\mathtt{i}}/K)=0$, for all $F\in \mathcal{L}$, we conclude that $K_H\subseteq K$. Thus $K_H=K$. But $K_H$ has simple top $L_H$ and $K$ has simple top $L_G$, so $H=G$ is the Duflo 1-morphism of $\mathcal{L}$.
This completes the second step of our proof.

Finally, let $G$ be the Duflo 1-morphism of $\mathcal{L}$. We have already seen that $G$ is good for $\mathcal{L}$, it remains to prove that it is great. Assume that $H$ is also good for $\mathcal{L}$, with $\psi :H\to \mathbbm{1}_\mathtt{i}$ being
the morphism such that $F\psi$ is right split, for all $F\in \mathcal{L}$. 

As above, $\im \varphi$ and $\im \psi$ give submodules $K_G$ and $K_H$ of $P_{\mathbbm{1}_\texttt{i}}$ with $F(P_{\mathbbm{1}_\texttt{i}}/K_G)=0$ and $F(P_{\mathbbm{1}_\texttt{i}}/K_H)=0$, for all $F\in\mathcal{L}$. Since the top of $K_G$ is $L_G$ and $L_G$ is not annihilated by $F\in \mathcal{L}$, there is a nonzero morphism $K_G\to K_H$ such that the diagram
\begin{displaymath}
\xymatrix{
P_G\ar@{->>}[r] & K_G\ar[d] \ar[r] & P_{\mathbbm{1}_\mathtt{i}}\ar@{=}[d] \\
P_H\ar@{->>}[r] & K_H\ar[r] & P_{\mathbbm{1}_\mathtt{i}}
}
\end{displaymath}
commutes.
Since $P_G$ is projective, there is a morphism $\alpha :P_G\to P_H$ making the left square commute. Thus the whole diagram commutes
and we obtain a factorization
\begin{displaymath}
\xymatrix{
P_G \ar[r]\ar[d]^{\alpha} & P_{\mathbbm{1}_\mathtt{i}} \\
P_H\ar[ur]
}
\end{displaymath}
implying that $G$ is great for $\mathcal{L}$.
\end{proof}

\subsection{Duflo 1-morphisms in $\mathscr{D}$}\label{subs_duflo}

For a positive integer $m$, we denote by 
$\mathscr{D}^{(m)}$ the $2$-full $2$-subcategory of
$\mathscr{D}$ given by the additive closure of
all $1$-morphisms in all two-sided cells $\mathcal{J}$
such that $\mathcal{J}\geq_J\mathcal{J}_m$, together
with $\mathbbm{1}_{\mathtt{i}}$. Note that 
$\mathscr{D}^{(m)}$ is a finitary $2$-category.

The following proposition suggests that 
$M_k$ is a very good candidate for being called a
{\em Duflo $1$-morphism} in its left cell in $\mathscr{D}^{(k)}$.

\begin{prop}\label{propduflo-1}
For any $m\geq k\geq 1$, the $1$-morphism 
$M_k$ of the finitary $2$-category $\mathscr{D}^{(m)}$
is great for $\mathcal{L}=\{ N_k,M_k\}$.
\end{prop}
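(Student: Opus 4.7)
The candidate morphism is $\varphi = \varphi_k: M_k \to \mathbbm{1}_\mathtt{i}$ from Section~\ref{s_hom_string}. Goodness requires a right splitting of $F \varphi_k$ for each $F \in \mathcal{L} = \{N_k, M_k\}$. For $F = M_k$, I construct the section $\xi: M_k \to M_k \otimes_D M_k$ directly by setting $\xi(m_{2j}) = m_{2j} \otimes m'_{2j}$ for $1 \leq j \leq k+1$, where $m$ and $m'$ denote the bases of the left and right tensor factors respectively. The bimodule structure forces $\xi(m_{2j-1}) = m_{2j} \otimes m'_{2j-1}$, and the tensor relation
\begin{align*}
m_{2j} \otimes m'_{2j-1} = m_{2j} \otimes x m'_{2j-2} = m_{2j}x \otimes m'_{2j-2} = m_{2j-1} \otimes m'_{2j-2}
\end{align*}
ensures consistency between the right-action and left-action derivations of $\xi$. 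A direct evaluation shows $(M_k \otimes \varphi_k) \circ \xi = \mathrm{id}_{M_k}$. For $F = N_k$, the adjunction $(S_k, N_k)$ of Lemma~\ref{adj} induces a natural isomorphism $\Hom(N_k, N_k \otimes M_k) \cong \Hom(S_k \otimes N_k, M_k)$; the projection $S_k \otimes N_k \twoheadrightarrow M_k$ onto the $M_k$-summand coming from the multiplication table \eqref{eqtable} corresponds under adjunction, up to a nonzero scalar, to a section of $N_k \varphi_k$.

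For greatness, let $(G', \varphi')$ be any good pair in $\mathscr{D}^{(m)}$. First, $\varphi'$ is necessarily surjective: otherwise $\im(\varphi') \subseteq \Bbbk x$, so $F\varphi'$ factors through $F \otimes_D \Bbbk x \hookrightarrow F$, whose image $Fx$ excludes the top generators of $F$ and is hence proper, contradicting split epimorphism. Decompose $G' = \bigoplus_i G'_i$ into indecomposables; since $\End(F) \cong \Bbbk$ is local, any splitting of $F\varphi'$ decomposes over components, forcing some indecomposable summand $G'_{i_0}$ to give a split epi $F \varphi'_{i_0}$. Cell analysis (summands of $F \otimes_D G'_{i_0}$ lie in cells $\geq_J \mathcal{J}_k$ and $\geq_J$ the cell of $G'_{i_0}$), combined with $\varphi'_{i_0}$ not factoring entirely through the simple (by the same argument as for $\varphi'$), forces $G'_{i_0} = M_l$ for some $k \leq l \leq m$ (via Lemma~\ref{lemhomto1}), or $G'_{i_0} = \mathbbm{1}_\mathtt{i}$.

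If $G'_{i_0} = \mathbbm{1}_\mathtt{i}$, then $\varphi'_{i_0}$ is a unit in $\End(\mathbbm{1}_\mathtt{i}) = D$, and $\beta$ with single nonzero component $(\varphi'_{i_0})^{-1}\varphi_k$ suffices. If $G'_{i_0} = M_l$ with $l \geq k$, Corollary~\ref{cordufcor}(i) writes $\varphi'_{i_0} = c\varphi_l + \eta$ with $c \in \Bbbk^\times$ and $\eta \in V_l$ factoring through the simple bimodule, and I take $\beta_{i_0} = c^{-1}(\iota + \delta)$, where $\iota: M_k \hookrightarrow M_l$ is the canonical inclusion satisfying $\varphi_l \circ \iota = \varphi_k$, and $\delta: M_k \to M_l$ is any bimodule map factoring through $\mathrm{soc}(M_l)$ with $\varphi_l \circ \delta = -c^{-1} \eta \circ \iota$. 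Such $\delta$ exists because $\varphi_l$ maps $\mathrm{soc}(M_l)$ surjectively onto $\mathrm{soc}(D) = \Bbbk x$. The key cancellation $\eta \circ \delta = 0$ follows from the identity $\mathrm{soc}(M_l) = \mathrm{rad}(M_l)$ holding for these string bimodules over the dual numbers: $\delta$ lands in $\mathrm{rad}(M_l)$, on which $\eta$ (factoring through $\mathrm{top}(M_l)$) vanishes. Expanding then yields $\varphi'_{i_0} \circ \beta_{i_0} = \varphi_k$ exactly.

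The main subtlety is handling the simple-factoring deformation $\eta$ of $\varphi'_{i_0}$: naively taking $\beta$ to be a scalar multiple of $\iota$ yields $\varphi_k$ only modulo a map in $V_k$. The exact cancellation via $\delta$ exploits the coincidence $\mathrm{soc}(M_l) = \mathrm{rad}(M_l)$ and the surjectivity of $\varphi_l|_{\mathrm{soc}(M_l)}$ onto $\Bbbk x$, both features specific to the dual-numbers setting.
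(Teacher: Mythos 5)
Your proof follows the same overall strategy as the paper's: take $\varphi_k$ as the candidate morphism, exhibit the splitting of $M_k\varphi_k$ via the distinguished summand of $M_k\otimes M_k$, and, for greatness, reduce to an indecomposable summand of $G'$ on which the component of $\varphi'$ does not factor through the simple bimodule, which by the cell analysis together with Lemma~\ref{lemhomto1} and Corollary~\ref{cordufcor} must be $\mathbbm{1}_{\mathtt{i}}$ or some $M_l$ with $k\leq l\leq m$. Your explicit $\delta$-correction, using $\mathrm{soc}(M_l)=\mathrm{rad}(M_l)$ and the surjectivity of $\varphi_l$ on the socle, is a more precise version of the paper's ``correction via morphisms from $M_k$ to the socle'' and is an improvement in rigor at that point; likewise your surjectivity argument for $\varphi'$ (image contained in $Fx\subsetneq F$, by Nakayama) is a valid alternative to the paper's right-exactness argument with the cokernel.

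Two local issues. First, for $F=N_k$ you assert that the adjoint of the projection $S_k\otimes N_k\twoheadrightarrow M_k$ is, up to a nonzero scalar, a section of $N_k\varphi_k$. Unwinding the adjunction, the composite of that adjoint with $N_k\varphi_k$ is the element of $\End_{D\text{-}D}(N_k)$ corresponding to $\varphi_k\circ p\in\Hom(S_k\otimes N_k,D)$, and you have not checked that this endomorphism is invertible (equivalently, that $\varphi_k\circ p$ agrees with the adjunction counit modulo the radical). The paper sidesteps this entirely: since $N_k\simeq M_k/\mathrm{span}\{m_{2k+3}\}$ and the section of $M_k\varphi_k$ sends $m_{2k+3}$ to $m_{2k+3}\otimes m_{2k+2}$, the splitting simply descends along the quotient; you should argue this way. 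Second, the claim $\End_{D\text{-}D}(F)\cong\Bbbk$ is false for these string bimodules (e.g.\ $m_2\mapsto m_1$, all other basis vectors to $0$, is a nonzero radical endomorphism of $M_k$), but your conclusion only requires that $\End(F)$ be local, which holds because $F$ is indecomposable and finite dimensional; state it that way. Neither issue affects the architecture of the argument.
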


\begin{proof}
Let us first establish that $M_k$ is good for $\mathcal{L}$.
It is easy to check, by a direct computation
(see Subsection~\ref{s8.2}), that the 
composition $M_k\otimes M_k$ has a direct summand 
isomorphic to $M_k$ spanned by
\begin{align*}
\{ m_2\otimes m_1,\ m_j\otimes m_{j},\ m_{j+1}\otimes m_j 
\mid j=2,4,\ldots ,2k+2\} ,
\end{align*}
and that the projection onto this summand is a right inverse to 
$M_k\varphi_k$. Using $N_k\simeq M_k/\mathrm{span} \{ m_{2k+3}\}$, 
gives also that $N_k\varphi_k$ is right split. 
Therefore $M_k$ is good for $\mathcal{L}$ with respect to the
morphism $\varphi_k:M_k\to \mathbbm{1}_{\mathtt{i}}$.
Note also that, by Corollary~\ref{cordufcor},
the choice of $\varphi_k$ is unique up to a non-zero scalar
and up to homomorphisms which factor through the simple
$D$-$D$-bimodule.

Let now $F$ be a $1$-morphism in $\mathscr{D}^{(k)}$
which is good for $\mathcal{L}$ via the map $\alpha:F\to D$.
To start with, we argue that $\alpha$ does not factor through
the simple $D$-$D$-bimodule. Indeed, if $\alpha$ does factor
through the simple $D$-$D$-bimodule, it is not surjective
as a map of $D$-$D$-bimodules. Applying the right exact
functor $M_k\otimes_D{}_-$ to the exact sequence
\begin{displaymath}
F\overset{\alpha}{\longrightarrow} D
\longrightarrow \mathrm{Coker}\to 0,
\end{displaymath}
we get the exact sequence
\begin{displaymath}
M_k\otimes_D F\overset{M_k\otimes_D\alpha}{\longrightarrow} M_k
\longrightarrow M_k\otimes_D \mathrm{Coker}\to 0.
\end{displaymath}
Note that  $\mathrm{Coker}$ is the simple 
$D$-$D$-bimodule and that $M_k\otimes_D \mathrm{Coker}\neq 0$.
Therefore $M_k\otimes_D\alpha$ is not right split.
This implies that $\alpha$ is surjective 
as a map of $D$-$D$-bimodules.

Now we show that if $F$ has an indecomposable direct summand $G\in \mathcal{J}_l$, $k<l\leq m$, 
such that the restriction of $\alpha$ to $G$ does not factor through the simple $D$-$D$-bimodule, 
then $\varphi_k$ factors through $\alpha$. 
Indeed, by Lemma~\ref{lemhomto1} the only such possibility is $G\simeq M_l$,
and by Corollary~\ref{cordufcor}
the restriction of $\alpha$ to this summand is a scalar multiple of $\varphi_l$.
As noted in Section~\ref{s_hom_string}, $\varphi_k$ factors via $\varphi_l$ for $k\leq l$,
so this provides a factorization of $\varphi_k$ through $\alpha$.
 
As the next step, we show that if the condition of the previous paragraph is not satisfied,
then $F$ contains a summand 
isomorphic to either $D$ or $M_k$ such that the 
restriction of $\alpha$ to this summand does not 
factor through the simple $D$-$D$-bimodule. 
Indeed, assume that this is not the case.
Then, by Lemma~\ref{lemhomto1}, the only possible indecomposable
summands $G$ of $F$ for which the restriction of $\alpha$
does not factor through the simple $D$-$D$-bimodule
come from two-sided cells
$\mathcal{J}$ such that $\mathcal{J}>_J\mathcal{J}_k$.
However, for such $G$, the composition
$M_kG$ cannot have any summands in $\mathcal{J}_k$
since $\mathcal{J}>_J\mathcal{J}_k$. Since $M_k$
is indecomposable, it follows that any morphism $M_kG\to M_k$
is a radical morphism. That $M_kG\to M_k$ is  a radical
morphism, for any summand $G$ isomorphic to 
$D$ or $M_k$, follows from our assumption by the arguments
in the previous paragraph. Therefore $M_k\otimes_D\alpha$
is a radical morphism and hence not right split,
as $M_k$ is indecomposable, a contradiction. 

Because of the previous paragraph, there is a direct
summand $G$ of $F$ isomorphic to either $M_k$ or $D$
such that the restriction of $\alpha$ to $G$
does not factor through the simple $D$-$D$-bimodule.
If $G\cong D$, then the restriction of
$\alpha$ to it is an isomorphism. We can pull back
$\varphi_k$ via this isomorphism and define the
map from $M_k$ to all other summands of $F$ as zero.
This provides the necessary factorization of 
$\varphi_k$ via $\alpha$. 

If $G\cong M_k$, we can pull back
$\varphi_k$ using first Corollary~\ref{cordufcor}
and then correction via morphisms from $M_k$
to the socle of $G$ (such morphisms 
factor through the simple $D$-$D$-bimodule).
In any case, the constructed factorization
implies that $M_k$ is great for $\mathcal{L}$
and completes the proof of our proposition.
\end{proof}

\subsection{Co-Duflo 1-morphisms in $\mathscr{D}$}
\label{s2-duf}

We can dualize Definition~\ref{def_great}. Given a 2-category $\mathscr{C}$ and a left cell $\mathcal{L}$ in $\mathscr{C}$ with $\mathtt{i}=\mathtt{i}_\mathcal{L}$, we say that a 1-morphism $H$ in $\mathscr{C}$ is \emph{co-good} for $\mathcal{L}$ if there is a 2-morphism $\psi: \mathbbm{1}_\mathtt{i}\to H$ such that $F\psi$ is left split, for all $F\in \mathcal{L}$. Moreover, we say that $H$ is \emph{co-great} for $\mathcal{L}$ if $H$ is co-good for $\mathcal{L}$ and, for any $H'$ which is co-good for $\mathcal{L}$ with $\psi': \mathbbm{1}_\mathtt{i}\to H'$, there is a 2-morphism $\gamma :H'\to H$ such that $\psi = \gamma \circ \psi '$.

The following proposition suggests that 
$W_k$ is a very good candidate for being called a
{\em co-Duflo $1$-morphism} in its left cell in $\mathscr{D}^{(k)}$.

\begin{prop}
For any $m\geq k\geq 1$, the $1$-morphism $W_k$ of the finitary 
$2$-category $\mathscr{D}^{(m)}$ is co-great for 
the left cell $\mathcal{L}=\{ W_k,S_k\}$.
\end{prop}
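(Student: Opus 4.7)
The proof should mirror that of Proposition~\ref{propduflo-1} in a dual manner. First, I would verify that $W_k$ is co-good for $\mathcal{L}$ via $\psi_k:\mathbbm{1}_\mathtt{i}\to W_k$: by the multiplication table~\eqref{eqtable}, $W_k\otimes_D W_k$ decomposes as $W_k\oplus V$ where all summands of $V$ lie in cells strictly above $\mathcal{J}_k$, and a direct computation dual to the one mentioned in Subsection~\ref{s8.2} identifies the image of $W_k\psi_k$ with the $W_k$-summand, so that projection onto it is a left inverse. The argument for $S_k\psi_k$ is analogous, using $S_k\otimes_D W_k\cong S_k$ modulo larger cells. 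By Corollary~\ref{cordufcor}(ii), $\psi_k$ is unique up to a nonzero scalar and morphisms factoring through the simple $D$-$D$-bimodule $L$.

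Now let $H$ be a $1$-morphism of $\mathscr{D}^{(m)}$ co-good for $\mathcal{L}$ via some $\beta:\mathbbm{1}_\mathtt{i}\to H$. I would first rule out that $\beta$ factors through $L$: if it did, then $W_k\beta$ would factor through the surjection $W_k\twoheadrightarrow W_k\otimes_D L\cong W_k/W_kx$, which has dimension $k+1<2k+1=\dim W_k$; the resulting composition $W_k\to W_k\otimes_D H$ would not be injective and hence could not be left split, a contradiction. Next, assume that $H$ has an indecomposable summand $G\in\mathcal{J}_l$ with $k<l\leq m$ such that $\beta|_G$ does not factor through $L$. By Lemma~\ref{lemhomto1}(ii), necessarily $G\cong W_l$, and by Corollary~\ref{cordufcor}(ii), $\beta|_G$ equals a nonzero scalar multiple of $\psi_l$ modulo a morphism factoring through $L$. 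Using the factorization $\psi_k=\pi_{l,k}\circ\psi_l$ recorded in Subsection~\ref{s_hom_string}, I would take $\gamma:H\to W_k$ to be the composition of the projection $H\twoheadrightarrow W_l$ with $\pi_{l,k}$ (with an appropriate scalar correction), yielding $\psi_k=\gamma\circ\beta$.

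In the remaining case, no such $\mathcal{J}_l$-summand exists, and I would argue that $H$ must then contain a summand isomorphic to $\mathbbm{1}_\mathtt{i}$ or $W_k$ on which $\beta$ does not factor through $L$. The main obstacle here, dualizing the concluding step of the proof of Proposition~\ref{propduflo-1}, is to verify that if every summand of $H$ with nontrivial restriction of $\beta$ lay in some cell $\mathcal{J}>_J\mathcal{J}_k$, then $W_k\otimes_D G$ would contain no $W_k$-summand for any such $G$, forcing $W_k\otimes_D\beta|_G$ to be a radical morphism; combined with the non-injectivity (from the argument above) of $W_k\otimes_D\beta|_G$ on the $\mathbbm{1}_\mathtt{i}$- and $W_k$-summands where $\beta|_G$ factors through $L$, this would make the full map $W_k\beta$ radical and hence not left split. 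Finally, given such a $\mathbbm{1}_\mathtt{i}$- or $W_k$-summand $G$ with nontrivial $\beta|_G$, one builds $\gamma$ by projecting $H\twoheadrightarrow G$ and then mapping to $W_k$ via $c^{-1}\psi_k$ (if $G\cong\mathbbm{1}_\mathtt{i}$ and $\beta|_G$ equals $c$ times the identity modulo $L$) or via $c^{-1}\cdot\id_{W_k}$ (if $G\cong W_k$ and $\beta|_G=c\psi_k$ modulo $L$), setting $\gamma$ to zero on the remaining summands; Corollary~\ref{cordufcor}(ii) absorbs the correction terms, completing the proof that $W_k$ is co-great for $\mathcal{L}$.
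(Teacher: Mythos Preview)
Your proposal is correct and follows essentially the same dual approach as the paper's proof: establish co-goodness via $\psi_k$ using the unique $W_k$- (resp.\ $S_k$-) summand of $W_k\otimes_D W_k$ (resp.\ $S_k\otimes_D W_k$), then for an arbitrary co-good $(H,\beta)$ locate a summand $G\cong D$ or $G\cong W_l$ with $l\geq k$ on which $\beta$ does not factor through the simple bimodule, and build the factorization $H\to W_k$ through that summand with a correction for the $L$-factoring part. Two minor remarks: your radical argument should explicitly cover \emph{all} summands $G$ on which $\beta|_G$ factors through $L$ (not just the $\mathbbm{1}_\mathtt{i}$- and $W_k$-summands), though your dimension argument $W_k\otimes_D L\cong W_k/W_kx$ already does this uniformly; and the phrase ``Corollary~\ref{cordufcor}(ii) absorbs the correction terms'' is imprecise in the same way the paper's ``correction via morphisms factoring through the simple bimodule'' is---what is really needed is that the discrepancy $\gamma\circ\beta-\psi_k\in\hat V_k$ can be killed by adjusting $\gamma_G$ by an endomorphism of $W_k$ (or a map $W_l\to W_k$) factoring through $L$, which one checks directly.
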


\begin{proof}
Consider the 2-morphism $\psi_k:\mathbbm{1}_\mathtt{i}\to W_k$. 
By a direct calculation, it is easy to check that $W_k\otimes W_k$ 
has a unique direct summand isomorphic to $W_k$ and that
$S_k\otimes W_k$ has a unique direct summand isomorphic to $S_k$.
The projections onto these summands provide left inverses for 
$W_k\psi_k$ and $S_k\psi_k$, respectively. This implies
that $W_k$ is co-good for $\mathcal{L}$ via $\psi_k$.

Assume now that $F$ is co-good for $\mathcal{L}$ via
some $\alpha:\mathbbm{1}_\mathtt{i}\to F$.
We need to construct a factorization $F\to W_k$.
Since multiplication with $x$ is a nilpotent endomorphism
of $D$, the endomorphism $W_k\otimes_A x$ is a nilpotent
endomorphism of $W_k$. In particular, this endomorphism
is a radical map. By a direct computation, one can check
that, for any $\beta:D\to W_k$ which factors through the
simple $D$-$D$-bimodule, the endomorphism $W_k\otimes_A \beta$
is not injective, in particular, it is a radical map.

Now, using arguments similar to the ones in the proof of
Proposition~\ref{propduflo-1}, one shows that there must
exist a summand $G$ of $F$, the restriction of $\alpha$
to which does not factor through the simple $D$-$D$-bimodule
and that this summand must be isomorphic to either 
$D$ or $W_l$ for some $l\geq k$. In the former case, the restriction of
$\alpha$ to $G$ is an isomorphism and the necessary 
factorization $F\to W_k$ is constructed 
via $G\to W_k$ using this isomorphism.
In the latter case, the necessary factorization  is
constructed via $G\to W_k$ using Corollary~\ref{cordufcor} and
the observation that $\varphi_l$ factors via $\varphi_k$ for $k<l$,
and then correction via morphisms from $G$
to $W_k$ which factor through the simple $D$-$D$-bimodule.
\end{proof}

\section{Some algebra and coalgebra 1-morphisms in $\mathscr{D}$}
\label{scoalgg}

\subsection{Algebra and coalgebra 1-morphisms}

Let $\mathscr{C}$ be a $2$-category. Recall that an {\em algebra 
structure} on a $1$-morphism $A\in\mathscr{C}(\mathtt{i},\mathtt{i})$
is a pair $(\mu,\eta)$ of morphisms $\mu:AA\to A$ and $\eta: 
\mathbbm{1}_{\mathtt{i}}\to A$ which satisfy the usual associativity and
unitality axioms
\begin{displaymath}
\mu\circ_v(\mu\circ_h\mathrm{id})=
\mu\circ_v(\mathrm{id}\circ_h\mu),\quad
\mathrm{id}=\mu\circ_v(\mathrm{id}\circ_h\eta),\quad
\mathrm{id}=\mu\circ_v(\eta\circ_h\mathrm{id}).
\end{displaymath}
Similarly, a {\em coalgebra structure} on a $1$-morphism 
$C\in\mathscr{C}(\mathtt{i},\mathtt{i})$
is a pair $(\delta,\varepsilon)$ of morphisms $\delta:C\to CC$ and 
$\varepsilon: C\to \mathbbm{1}_{\mathtt{i}}$ which satisfy the usual coassociativity and counitality axioms
\begin{displaymath}
(\delta\circ_h\mathrm{id})\circ_v\delta=
(\mathrm{id}\circ_h\delta)\circ_v\delta,\quad
\mathrm{id}=(\mathrm{id}\circ_h\varepsilon)\circ_v\delta,\quad
\mathrm{id}=(\varepsilon\circ_h\mathrm{id})\circ_v\delta.
\end{displaymath}

In the case of fiat 2-categories, it is observed in
\cite[Section~6]{MMMT} that a
Duflo 1-morphism often has the structure of a coalgebra 1-morphism 
(as suggested by the existence of a map from the identity to a Duflo $1$-morphism) This is particularly interesting as
it is shown in \cite{MMMT} that any simple transitive 2-representation 
of a fiat 2-category can be constructed using categories of
certain comodules over coalgebra 1-morphisms.

Let $(A,\mu,\eta)$ be an algebra $1$-morphism in $\mathscr{C}$. 
A {\em right module} over $A$ is a pair $(M,\rho)$, where
$M$ is a $1$-morphism in $\mathscr{C}$ and $\rho:MA\to M$
is such that the usual associativity and unitality axioms are 
satisfied:
\begin{displaymath}
\rho\circ_v(\rho\circ_h\mathrm{id})=
\rho\circ_v(\mathrm{id}\circ_h\mu),\quad
\mathrm{id}=\rho\circ_v(\mathrm{id}\circ_h\eta).
\end{displaymath}
Dually, one defines the notion of a comodule over
a coalgebra. Morphisms between (co)modules are defined
in the obvious way. We denote by $\mathrm{mod}_{\mathscr{C}}(A)$
the category of all right $A$-modules in $\mathscr{C}$,
and by $\mathrm{comod}_{\mathscr{C}}(C)$
the category of all right $C$-comodules in $\mathscr{C}$.

\subsection{Coalgebra structure on Duflo $1$-morphisms}\label{s8.2}

Given the results from the previous section, it is 
natural to ask whether $M_k$ is a 
coalgebra 1-morphism in $\mathscr{D}$.

\begin{prop}\label{prop_coalgebra}
For a positive integer $k$, the $1$-morphism $M_k$ 
has the structure of a coalgebra 1-morphism in $\mathscr{D}$. 
Moreover, the $1$-morphism $N_k$ has the structure of 
a right $M_k$-module.
\end{prop}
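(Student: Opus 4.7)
My plan is to exploit the adjunction $(S_k \otimes_D -,\, N_k \otimes_D -)$ from Lemma~\ref{adj}, which abstractly endows the composition $S_k \otimes_D N_k$ with a comonad structure. Under the identification from~\eqref{eqtable} of $M_k$ with the unique summand of $S_k \otimes_D N_k$ lying in $\mathcal{J}_k$, the counit of the adjunction restricts to a map $\varepsilon : M_k \to \mathbbm{1}_{\mathtt{i}}$ which, by Corollary~\ref{cordufcor}, agrees up to normalization with $\varphi_k$ from Section~\ref{s_hom_string}; I take this as the counit of the coalgebra. For the comultiplication, I take $\delta : M_k \to M_k \otimes_D M_k$ to be the inclusion of the distinguished $M_k$-summand of $M_k \otimes_D M_k$ exhibited in the proof of Proposition~\ref{propduflo-1}, which, after passing to the appropriate summands, coincides with the abstract comonad comultiplication $\mathrm{id}_{S_k} \circ_h \eta \circ_h \mathrm{id}_{N_k}$, where $\eta : \mathbbm{1}_{\mathtt{i}} \to N_k \otimes_D S_k$ is the unit of the adjunction.

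The counitality axiom $(\mathrm{id}_{M_k} \circ_h \varepsilon) \circ_v \delta = \mathrm{id}_{M_k}$ is exactly the right-split condition already established in Proposition~\ref{propduflo-1}; the mirror axiom $(\varepsilon \circ_h \mathrm{id}_{M_k}) \circ_v \delta = \mathrm{id}_{M_k}$ reduces to a short direct check on the basis $\{m_j\}$ using $\varphi_k(m_j)=1$ for $j$ even and $\varphi_k(m_j)=x$ for $j$ odd. Coassociativity $(\delta \circ_h \mathrm{id}_{M_k}) \circ_v \delta = (\mathrm{id}_{M_k} \circ_h \delta) \circ_v \delta$ then follows from the standard interchange-law/naturality argument for comonads arising from adjunctions: both sides unwind to the two equivalent ways of inserting $\eta$ into $S_k N_k S_k N_k S_k N_k$, which agree by the interchange law for horizontal composition. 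A purely computational alternative is to expand both sides on each basis vector of $M_k$ inside $M_k^{\otimes 3}$, but this is tedious.

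For the right $M_k$-module structure on $N_k$ --- most naturally read as a right comodule structure over the coalgebra $M_k$ --- I would define the coaction $\rho : N_k \to N_k \otimes_D M_k$ via $\rho := \eta \circ_h \mathrm{id}_{N_k}$, using the identification $N_k \otimes_D M_k = N_k \otimes_D S_k \otimes_D N_k$ together with the distinguished summand $W_k \otimes_D N_k \cong N_k$ from~\eqref{eqtable}. Coaction counitality $(\mathrm{id}_{N_k} \circ_h \varepsilon) \circ_v \rho = \mathrm{id}_{N_k}$ is precisely one of the triangle identities for the adjunction $S_k \dashv N_k$, and coaction coassociativity $(\rho \circ_h \mathrm{id}_{M_k}) \circ_v \rho = (\mathrm{id}_{N_k} \circ_h \delta) \circ_v \rho$ follows from the same interchange argument as before.

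I expect the main obstacle to be the bookkeeping of the qualifier ``modulo strictly larger two-sided cells'' in~\eqref{eqtable}: the products $S_k \otimes_D N_k$ and $N_k \otimes_D S_k$ contain $M_k$ and $W_k$ only as direct summands, so the abstract adjunction maps must be composed with the appropriate summand inclusions and projections at every step. Verifying that these projections commute with the horizontal insertions of $\eta$ --- so that restricting the comonad structure on $S_k N_k$ to its $M_k$-component really yields a coalgebra $1$-morphism of $\mathscr{D}$ --- is the delicate point, which I would settle by combining the near-uniqueness from Corollary~\ref{cordufcor} with the universal property of $M_k$ furnished by Proposition~\ref{propduflo-1}.
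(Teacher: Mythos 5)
Your plan — to extract the coalgebra structure on $M_k$ from the comonad $S_k\otimes_D N_k\otimes_D{-}$ attached to the adjunction of Lemma~\ref{adj} — is conceptually appealing, but it breaks down at exactly the point you flag as ``delicate'', and the tools you propose there do not close the gap. The comonad axioms hold for the \emph{whole} $1$-morphism $S_k\otimes_D N_k$, which decomposes as $M_k\oplus X$ with $X$ a direct sum of indecomposables from strictly larger two-sided cells. A direct summand of a coalgebra $1$-morphism does not inherit a coalgebra structure for formal reasons: if you define $\delta$ and $\varepsilon$ by sandwiching the comonad structure maps between the inclusion of, and projection onto, the $M_k$-summand, then counitality for $M_k$ differs from counitality of the comonad by a cross-term involving the component of the comultiplication landing in $M_k\otimes_D X$ followed by the restriction of the adjunction counit to $X$; coassociativity likewise acquires cross-terms routed through $X\otimes_D M_k$ and $M_k\otimes_D X$. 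These need not vanish a priori, since $X$ contains summands (for instance $\Bbbk$-split bimodules) admitting nonzero maps to $\mathbbm{1}_{\mathtt{i}}$ that do not factor through the simple bimodule, and $\mathrm{Hom}_{D\text{-}D}(M_k,M_k\otimes_D X)$ need not be zero. Corollary~\ref{cordufcor} only pins down morphisms $M_k\to\mathbbm{1}_{\mathtt{i}}$ modulo those factoring through the simple bimodule (and you would additionally have to rule out that the restricted adjunction counit is degenerate, i.e.\ that the scalar is zero), while Proposition~\ref{propduflo-1} is a statement about factorizations of counit-type maps; neither controls the $M_k\otimes_D X$-components of the comultiplication. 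So the claim that restricting the comonad structure ``really yields a coalgebra $1$-morphism'' is precisely the content that remains unproved. The same objection applies to your comodule structure on $N_k$: the triangle identity gives counitality for the coaction valued in $N_k\otimes_D S_k\otimes_D N_k$, not for its compression to $N_k\otimes_D M_k$.

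For comparison, the paper avoids the issue entirely by working with explicit bases: it identifies the unique direct summand of $M_k\otimes_D M_k$ isomorphic to $M_k$ on a concrete basis, defines $\delta$ by explicit formulas on the basis $\{m_j\}$, takes $\varepsilon=\varphi_k$, and checks the axioms by direct computation; the structure on $N_k$ is then obtained by pushing $\delta$ through the quotient $N_k\simeq M_k/\mathrm{span}\{m_{2k+3}\}$. To rescue your approach you would need to verify that the cross-terms above vanish (or that your compressed maps agree with the paper's explicit ones), which in practice amounts to the same basis computation the abstract framing was meant to avoid.
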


\begin{proof}
Recall the standard basis of the bimodule
$M_k$ from Section~\ref{sectionhom}.
The tensor product $M_k\otimes M_k$ has a unique direct summand 
isomorphic to $M_k$ with a basis given by 
\begin{displaymath}
\xymatrix@=12pt{
m_2\otimes m_1 & m_2\otimes m_2\ar[l]\ar[d]\\
& m_3\otimes m_2 &  \ar[l]\ddots \ar[d]\\
&& m_{2k+1}\otimes m_{2k} & m_{2k+2}\otimes m_{2k+2}\ar[l]\ar[d] \\
&&& m_{2k+3}\otimes m_{2k+2}.}
\end{displaymath}
Moreover, we have 
$m_{2j+1}\otimes m_{2j}= m_{2j+2}\otimes m_{2j+1}$, for $j=1,\ldots ,k$.
We define the comultiplication $\delta :M_k\to M_k\otimes M_k$ 
explicitly as follows:
\begin{displaymath}
\left\lbrace
\begin{array}{rcll}
\delta (m_{2j}) &=& m_{2j}\otimes m_{2j}  , & 1\leq j\leq k+1\\
\delta (m_{2j+1}) &=& m_{2j+1}\otimes m_{2j}= m_{2j+2}\otimes m_{2j+1}, &1\leq j\leq k+1\\
\delta (m_1)&=& m_2\otimes m_1&\\
\delta (m_{2k+3})&=&m_{2k+3}\otimes m_{2k+2}
\end{array}
\right.
\end{displaymath}
As a counit, we take the morphism $\varphi_k$ from 
Section~\ref{sectionhom}. The counitality and
comultiplication axioms are now checked by a lengthy
but straightforward computation.

To prove that $N_k$ is a right $M_k$-comodule, 
we recall that $N_k\simeq M_k/\mathrm{span}\{ m_{2k+3}\}$. 
Let $\pi :M_k\to N_k$ be the canonical projection. 
Then $\rho = \pi\circ_h \mu$ makes $N_k$ a right $M_k$-comodule. 

Indeed, all necessary properties for $\rho$ follow directly from
the corresponding properties for $\mu$.
\end{proof}

\begin{cor}\label{cormkrep}
The $2$-representation 
$\mathscr{C}M_k\subset \mathrm{comod}_{\mathscr{C}}(C)$ 
of $\mathscr{C}$
has a unique simple transitive quotient, moreover, this quotient
is equivalent to the cell $2$-representation
$\mathbf{C}_{\mathcal{L}}$, where $\mathcal{L}=\{M_k,N_k\}$.
\end{cor}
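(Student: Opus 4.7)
My approach is to combine the coalgebra structure on $M_k$ from Proposition~\ref{prop_coalgebra} with the classification in Theorem~\ref{mainthm}(ii)--(iii).

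First I would verify that $\mathscr{D}M_k$ is a finitary transitive $2$-representation of $\mathscr{D}$ with apex exactly $\mathcal{J}_k$. By construction $M_k$ is an object via the comultiplication $\delta$, and $N_k$ is an object via the $M_k$-comodule structure $\rho = \pi \circ_h \delta$ from Proposition~\ref{prop_coalgebra}; alternatively, $N_k$ appears as the summand $N_k$ of $N_k \otimes_D M_k$ (using the multiplication table \eqref{eqtable}). The apex contains $\mathcal{J}_k$ because $M_k \otimes_D M_k$ has $M_k$ as a direct summand, so $M_k$ acts non-trivially on itself; it cannot lie strictly above in the $\leq_J$-order since $\mathcal{J}_{M_0}$ is not idempotent and the $\Bbbk$-split cell $\mathcal{J}_{\Bbbk\text{-split}}$ applied to $M_k$ leaves $\mathcal{J}_k$.

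Second, I would invoke existence and uniqueness of a simple transitive quotient $\mathbf{N}$ of $\mathscr{D}M_k$ via the standard Jordan--H\"older machinery from \cite{MM5}, applied to the finitary $2$-subcategory $\mathscr{D}^{(k)}$. By Theorem~\ref{mainthm}(ii), $\mathbf{N}$ has rank $1$ or $2$. I would rule out rank $1$ by showing that $M_k$ and $N_k$ descend to non-isomorphic indecomposable objects in $\mathbf{N}$: computing the action matrices of $W_k, S_k, N_k, M_k$ on the pair $\{[M_k], [N_k]\}$ using \eqref{eqtable} produces precisely the rank-$2$ matrices of Proposition~\ref{prop.matrix}, which is only compatible with a rank-$2$ simple transitive quotient. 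Theorem~\ref{mainthm}(iii) then yields $\mathbf{N} \simeq \mathbf{C}_{\mathcal{L}}$ for $\mathcal{L} = \{M_k, N_k\}$.

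The main obstacle is the rank-$2$ verification, i.e., showing that neither $M_k$ nor $N_k$ is annihilated by, nor identified with the other in, the maximal $\mathscr{D}$-stable ideal defining $\mathbf{N}$. I would tackle this by computing $\Hom_{\mathscr{D}M_k}(M_k, N_k)$ and $\End_{\mathscr{D}M_k}(M_k)$ directly from the comodule structures and comparing with the Cartan invariants of $\mathbf{C}_{\mathcal{L}}$ derived in Section~\ref{sec_rank2_cell}; equivalently, a collapse of $M_k$ and $N_k$ would force the action of $\mathcal{J}_k$ to be trivial modulo the ideal, contradicting that $\mathcal{J}_k$ is the apex.
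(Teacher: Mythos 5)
Your overall strategy coincides with the paper's: show that the unique simple transitive quotient of $\mathscr{C}M_k$ has apex $\mathcal{J}_k$ and rank at least $2$, then invoke Theorem~\ref{mainthm}(iii). However, the step where you rule out rank $1$ — which you yourself identify as the main obstacle — is not correctly justified. You claim that a collapse of the images of $M_k$ and $N_k$ ``would force the action of $\mathcal{J}_k$ to be trivial modulo the ideal, contradicting that $\mathcal{J}_k$ is the apex''. This is false: in a rank $1$ simple transitive $2$-representation with apex $\mathcal{J}_k$ every $1$-morphism of $\mathcal{J}_k$ acts with matrix $[1]$, which is a nonzero action entirely compatible with $\mathcal{J}_k$ being the apex; indeed Theorem~\ref{mainthm}(ii) explicitly allows rank $1$ and Theorem~\ref{mainthm}(iv) exhibits such a $2$-representation for $k=1$. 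Likewise, your computation of action matrices on the pair $\{[M_k],[N_k]\}$ only yields the rank-$2$ matrices of Proposition~\ref{prop.matrix} if you already know that these two objects remain distinct indecomposables after passing to the quotient — which is precisely what has to be proved.

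The missing ingredient is the concrete observation the paper uses: by \eqref{eqtable}, $N_k\otimes_D M_k\cong N_k$ modulo two-sided cells strictly greater than $\mathcal{J}_k$ (and those are killed in the quotient), so $M_k$ is \emph{not} a direct summand of $N_kM_k$. Since $\End(M_k)$ is local and the defining ideal meets it in a proper ideal (it does not contain $\id_{M_k}$), any composite $M_k\to N_kM_k\to M_k$ is a radical morphism and stays non-invertible in the quotient; hence the image of $M_k$ is not a summand of the image of $N_kM_k$. The latter is nonzero, since otherwise $M_k(N_kM_k)=(M_kN_k)M_k=M_kM_k$, which contains $M_k$ as a summand, would also vanish. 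This forces at least two isomorphism classes of indecomposables, and Theorem~\ref{mainthm}(iii) finishes the argument. Your apex justification should also be tightened along the same lines: the phrase about the $\Bbbk$-split cell ``leaving $\mathcal{J}_k$'' is not correct as stated; the right argument is that for any $F>_J\mathcal{J}_k$ the $\mathscr{C}$-stable ideal generated by $\id_{FM_k}$ does not contain $\id_{M_k}$, so it lies in the maximal ideal and $F$ is annihilated in the quotient.
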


\begin{proof}
As $M_k$ is indecomposable, the unique simple transitive
quotient $\mathbf{M}$ of $\mathscr{C}M_k$ is the quotient of 
$\mathscr{C}M_k$ by the sum of all $\mathscr{C}$-stable ideals 
in $\mathscr{C}M_k$ which do not contain $\mathrm{id}_{M_k}$. 
Clearly, $M_k$ does not annihilate $M_k$. At the same time,
for any $F>_J M_k$, we have that $\mathscr{C}FM_k$
does not contain $\mathrm{id}_{M_k}$. Therefore any such 
$F$ is killed by $\mathbf{M}$. This means that 
$\mathbf{M}$ has apex $\mathcal{J}_k$. 

Further, $N_kM_k$ does not have any copy of $M_k$
as a direct summand. Therefore the rank of $\mathbf{M}$
is at least $2$. Now the claim of our corollary follows
from Theorem~\ref{mainthm}(iii).
\end{proof}

\subsection{Algebra structure on co-Duflo algebra 1-morphisms}

Similarly to the previous section, it is natural to ask whether 
$W_k$ is an algebra 1-morphism in $\mathscr{D}$. 

\begin{prop}\label{prop_algebra}
For a positive integer $k$, the $1$-morphism $W_k$ 
has the structure of an algebra 1-morphism in $\mathscr{D}$. 
Moreover, the $1$-morphism $S_k$ has the structure of 
a right $W_k$-module.
\end{prop}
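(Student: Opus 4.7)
The proof is dual to that of Proposition~\ref{prop_coalgebra} and reflects the fact, recorded in Lemma~\ref{adj}, that $(S_k, N_k)$ is an adjoint pair; hence $N_k \otimes_D S_k$ naturally carries a monad structure with $S_k$ as a right module over it. Since $N_k \otimes_D S_k \cong W_k \oplus V$, where $V$ is a direct sum of indecomposables in the strictly larger two-sided cell $\mathcal{J}_{\Bbbk\text{-split}}$, one expects this structure to restrict to $W_k$ itself.

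Concretely, I would take $\eta := \psi_k : \mathbbm{1}_{\mathtt{i}} \to W_k$ from Section~\ref{sectionhom} as the unit. For the multiplication, a basis calculation analogous to the one used to construct $\delta$ in the proof of Proposition~\ref{prop_coalgebra} shows that $W_k \otimes_D W_k$ has a unique direct summand isomorphic to $W_k$, spanned by
\begin{displaymath}
\{ w_{2j-1} \otimes w_{2j-1} \mid 1 \le j \le k+1\} \cup \{ w_{2j+1} \otimes w_{2j} \mid 1 \le j \le k\},
\end{displaymath}
with the complementary summands all $\Bbbk$-split and thus in the larger cell. I define $\mu$ as the composition of the projection onto this summand with the identification sending $w_{2j-1}\otimes w_{2j-1}\mapsto w_{2j-1}$ and $w_{2j+1}\otimes w_{2j}\mapsto w_{2j}$. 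The unitality axioms then hold essentially by construction, since for each basis vector $w$ of $W_k$ the image of $\psi_k(1)\otimes w$, and likewise of $w\otimes \psi_k(1)$, projects to precisely the element of the distinguished summand corresponding to $w$. Associativity is a lengthy but straightforward verification on basis elements of $W_k^{\otimes 3}$. For the right $W_k$-module structure on $S_k$, the unique direct summand of $S_k\otimes_D W_k$ isomorphic to $S_k$ (whose existence is already invoked in the co-Duflo proposition of Section~\ref{s2-duf}) furnishes $\rho: S_k\otimes_D W_k\to S_k$ by projection and identification, and the module axioms are checked in the same style.

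The main obstacle is not conceptual but the bookkeeping of tensor relations in $W_k\otimes_D W_k$ and $S_k\otimes_D W_k$. A cleaner alternative, avoiding most of the calculation, is to define $\mu$ abstractly as the composite
\begin{displaymath}
W_k\otimes_D W_k \hookrightarrow (N_k\otimes_D S_k)\otimes_D(N_k\otimes_D S_k) \cong N_k\otimes_D(S_k\otimes_D N_k)\otimes_D S_k \xrightarrow{\mathrm{id}\circ_h\epsilon\circ_h\mathrm{id}} N_k\otimes_D S_k \twoheadrightarrow W_k,
\end{displaymath}
where $\epsilon:S_k\otimes_D N_k\to \mathbbm{1}_{\mathtt{i}}$ is the counit of the adjunction from Lemma~\ref{adj}, factoring through the projection onto the $M_k$-summand and then through $\varphi_k$; the triangle identities then force both associativity and unitality directly, at the cost of first making precise the direct-summand decompositions involved.
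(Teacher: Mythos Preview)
Your explicit construction is correct and coincides with the paper's proof: it too exhibits the unique $W_k$-summand of $W_k\otimes_D W_k$ with the same basis (your $w_{2j+1}\otimes w_{2j}$ equals the paper's $w_{2j}\otimes w_{2j-1}$ via the tensor relation), defines $\mu$ as the projection onto it, takes $\psi_k$ as unit, and leaves the axioms to direct verification; the $S_k$-module structure is likewise obtained by projecting $S_k\otimes_D W_k$ onto its unique $S_k$-summand.

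Your monad-theoretic alternative is a genuinely different and attractive idea not present in the paper. A word of caution, though: the triangle identities give associativity and unitality for the monad $N_kS_k$, not for $W_k$ directly. To conclude that the composite $W_kW_k\hookrightarrow (N_kS_k)(N_kS_k)\to N_kS_k\twoheadrightarrow W_k$ is associative you still need to check that the inclusion and projection $W_k\leftrightarrows N_kS_k$ are compatible with the monad structure (e.g.\ that $W_k$ is a quotient monad, or equivalently that the complementary $\Bbbk$-split summands form a two-sided ideal for the monad multiplication). This is plausible, since tensoring anything in $\mathcal{J}_k$ with a $\Bbbk$-split bimodule stays $\Bbbk$-split, but it is precisely the ``making precise the direct-summand decompositions'' that you flag, and it is not for free from the adjunction alone.
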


\begin{proof}
The tensor product $W_k\otimes W_k$ has a unique direct 
summand isomorphic to $W_k$, namely, the direct summand 
with the basis
\begin{displaymath}
\xymatrix@=12pt{
w_1\otimes w_1\ar[d] \\
w_2\otimes w_1 & w_3\otimes w_3\ar[l]\ar[d]\\
&\ddots &w_{2k-1}\otimes w_{2k-1} \ar[l] \ar[d]\\
&&w_{2k}\otimes w_{2k-1} & w_{2k+1}\otimes w_{2k+1}\ar[l],
}
\end{displaymath}
moreover, $w_{2j}\otimes w_{2j-1}=w_{2j+1}\otimes w_{2j}$, 
for $1\leq j\leq k$. This allows us to define multiplication $\mu$ 
as the projection onto this direct summand. As the unit 
morphism, we take $\psi_k$ from Section~\ref{sectionhom}.
All necessary axioms are checked by a straightforward
computation.

The projection onto the unique summand of $S_k\otimes W_k$ isomorphic to $S_k$ provides $S_k$ with the structure of a right $W_k$-module.
Note that letting $\theta :M_k\to S_k$ and $\zeta: M_k\to W_k$ be the canonical projections (see Section~\ref{s_hom_string}),
and $\pi_{M_k}:M_k\otimes M_k\to M_k$ the projection as in the proof of Proposition~\ref{propduflo-1},
the projection $S_k\otimes W_k\to W_k$ makes the following diagram commute.
\begin{displaymath}
\xymatrix{
M_k\otimes M_k\ar[r]^{\pi_{M_k}}\ar[d]_{\theta\otimes \zeta} & M_k\ar[d]^\theta \\
S_k\otimes W_k\ar[r] & S_k
}
\end{displaymath}
Verifying that this gives $S_k$ the structure of a right $W_k$-module is done by straightforward computation.
\end{proof}

\begin{cor}\label{cormkrep-2}
The $2$-representation 
$\mathscr{C}W_k\subset \mathrm{comod}_{\mathscr{C}}(C)$ 
of $\mathscr{C}$
has a unique simple transitive quotient, moreover, this quotient
is equivalent to the cell $2$-representation
$\mathbf{C}_{\mathcal{L}}$, where $\mathcal{L}=\{W_k,S_k\}$.
\end{cor}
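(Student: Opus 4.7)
The plan is to mimic, verbatim, the proof of Corollary~\ref{cormkrep}, replacing $M_k$ with $W_k$ throughout and using the algebra structure of Proposition~\ref{prop_algebra} in place of the coalgebra structure of Proposition~\ref{prop_coalgebra}. Since $W_k$ is indecomposable, the unique simple transitive quotient $\mathbf{M}$ of $\mathscr{C}W_k$ is the quotient by the sum of all $\mathscr{C}$-stable ideals in $\mathscr{C}W_k$ which do not contain $\mathrm{id}_{W_k}$; the remaining steps are to identify the apex, to bound the rank from below by $2$, and then to invoke Theorem~\ref{mainthm}(iii).

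For the apex, the multiplication $\mu\colon W_kW_k\to W_k$ from Proposition~\ref{prop_algebra} exhibits $W_k$ as a direct summand of $W_kW_k$, so $W_k$ does not annihilate $W_k$, and hence $\mathcal{J}_k$ is not killed by $\mathbf{M}$. Conversely, for any indecomposable $F$ with $F>_J W_k$, any factorization of $\mathrm{id}_{W_k}$ through $FW_k$ would force $W_k$ to be a direct summand of $FW_k$, whence $W_k\geq_J F$, contradicting $F>_J W_k$. Therefore $\mathscr{C}FW_k$ does not contain $\mathrm{id}_{W_k}$, so $F$ is killed by $\mathbf{M}$, confirming that the apex of $\mathbf{M}$ is exactly $\mathcal{J}_k$.

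To bound the rank below, I would read off from the multiplication table~\eqref{eqtable} that $S_k\otimes_D W_k\simeq S_k$, which does not contain $W_k$ as a direct summand; this produces two non-isomorphic indecomposable objects $W_k$ and $S_kW_k\simeq S_k$ in $\mathbf{M}$, so the rank is at least $2$. Theorem~\ref{mainthm}(iii) then identifies $\mathbf{M}$ with the cell $2$-representation $\mathbf{C}_{\mathcal{L}}$ for $\mathcal{L}=\{W_k,S_k\}$. I expect no serious obstacle: the whole argument is essentially the symmetric counterpart of Corollary~\ref{cormkrep}, and the only step requiring momentary care is the apex argument, which rests purely on the definition of the two-sided preorder together with indecomposability of $W_k$.
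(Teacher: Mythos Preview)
Your proposal is correct and is precisely the paper's own proof: the paper simply writes ``Mutatis mutandis Corollary~\ref{cormkrep}'', and what you have written is exactly that translation, with $W_k$ in place of $M_k$ and $S_kW_k$ playing the role of $N_kM_k$ for the rank bound.
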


\begin{proof}
Mutatis mutandis Corollary~\ref{cormkrep}.
\end{proof}

\subsection{Rank $1$ representations are non-constructible}
\label{s-nonc}

In this last subsection we would like to emphasize one
major difference between the $2$-representation
theory of $\mathscr{D}$ and that of fiat $2$-categories.

\begin{definition}\label{def_constructible}
Let $\mathscr{C}$ be a (finitary) 2-category and let
$\mathscr{B}\in \{\mathscr{C},\underline{\mathscr{C}},\overline{\mathscr{C}} \}$. 
A 2-rep\-re\-sen\-tation $\mathbf{M}$ of 
$\mathscr{C}$ is called \emph{$\mathscr{B}$-constructible} 
if there is a (co)algebra 1-morphism $C$ in 
$\mathscr{B}$, a $\mathscr{C}$-stable subcategory 
$\mathcal{X}$ of the category of right $C$-(co)modules, 
and a $\mathscr{C}$-stable ideal $\mathcal{I}$ in $\mathcal{X}$ 
such that $\mathbf{M}$ is equivalent to $\mathcal{X}/\mathcal{I}$.
\end{definition}

If $\mathscr{C}$ is fiat, then any simple transitive
$2$-representation of $\mathscr{C}$ is both
$\underline{\mathscr{C}}$- and 
$\overline{\mathscr{C}}$-constructible by \cite{MMMT}.
From \cite[Section~3]{MMMTZ} it follows that 
faithful simple transitive $2$-representation
of $\mathcal{J}$-simple fiat $2$-categories are
even $\mathscr{C}$-constructible.

Corollary~\ref{cormkrep} implies that, for each $k\geq 1$, the 
cell  2-representation $\mathbf{C}_{\mathcal{L}}$
of $\mathscr{D}^{(k)}$, where $\mathcal{L}=\{ M_k,N_k\}$, 
is $\mathscr{D}^{(k)}$-constructible.

The following statement, in some sense, explains why 
the statement of Theorem~\ref{mainthm}(iv) is
as it is.

\begin{theorem}\label{nonconstr}
Let $k$ and $m$ be positive integers such that $2\leq k\leq m$.
Let $\mathbf{M}$ be a rank $1$ simple transitive 
2-representation of $\mathscr{D}^{(m)}$ with apex $\mathcal{J}_k$. 
Then $\mathbf{M}$ is not $\mathscr{D}^{(m)}$-constructible.
\end{theorem}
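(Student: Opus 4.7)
The plan is to argue by contradiction. Suppose $\mathbf{M}$ is $\mathscr{B}$-constructible for some $\mathscr{B} \in \{\mathscr{D}^{(m)}, \underline{\mathscr{D}^{(m)}}, \overline{\mathscr{D}^{(m)}}\}$; by the duality between algebra/module and coalgebra/comodule constructions, it suffices to treat the case where $C = (C, \delta, \varepsilon)$ is a coalgebra $1$-morphism and $\mathbf{M}(\mathtt{i}) \simeq \mathcal{X}/\mathcal{I}$ for a $\mathscr{D}^{(m)}$-stable subcategory $\mathcal{X}$ of $\mathrm{comod}_{\mathscr{B}}(C)$ and a $\mathscr{D}^{(m)}$-stable ideal $\mathcal{I}$.

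First, I would extract the structure of $\mathbf{M}$. By Proposition~\ref{prop.matrix} every action matrix of a $1$-morphism in $\mathcal{J}_k$ is $[1]$ and $[[N_k]] = [S_k]^{\mathrm{tr}} = [1]$. Combined with the projectivity of $\overline{\mathbf{M}}(N_k)$ (from Theorem~\ref{thm_projective} and Lemma~\ref{adj}), this forces $\overline{\mathbf{M}}(N_k) L \cong P$ with the simple object $L$ appearing with multiplicity $1$ in it, so $P = L$; the underlying basic algebra $B$ of $\mathbf{M}(\mathtt{i})$ is therefore $\Bbbk$ and $\mathbf{M}(\mathtt{i}) \simeq \Bbbk\text{-}\mathrm{vect}$, with the central action of $D$ (coming from $\mathrm{End}(\mathbbm{1}_{\mathtt{i}})$) factoring through $D \twoheadrightarrow \Bbbk$.

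Next, let $(M_X, \rho)$ in $\mathcal{X}$ represent the unique indecomposable object in $\mathcal{X}/\mathcal{I}$. Counitality makes $M_X$ a split direct summand of $M_X \otimes C$ via $(\mathrm{id}_{M_X} \otimes \varepsilon) \circ \rho = \mathrm{id}_{M_X}$. The cell-order fact that every indecomposable summand of $A \otimes B$ in $\mathscr{D}$ satisfies $\ge_L B$ and $\ge_R A$, together with the apex condition, constrains the summands of $M_X$ surviving in $\mathcal{X}/\mathcal{I}$ to lie in cells $\ge_J \mathcal{J}_k$ (so $\mathbbm{1}_{\mathtt{i}}$ cannot be a summand) and the summands of $C$ in a dual fashion. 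Since $\varepsilon$ cannot factor entirely through the simple $D$-$D$-bimodule without violating the counit axiom, Lemma~\ref{lemhomto1} and Corollary~\ref{cordufcor} identify the candidate summands of $C$ contributing non-trivially as copies of $M_l$ from each $\mathcal{J}_l$ ($l \ge 1$), copies of $M_0$ from $\mathcal{J}_{M_0}$, and possibly summands from $\mathcal{J}_{\Bbbk \text{-split}}$ or $\mathbbm{1}_{\mathtt{i}}$.

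The heart of the argument, and the main obstacle, is excluding the remaining configurations. If $C$ contains some $M_l$ with $l \ge k$ as a summand, then Proposition~\ref{prop_coalgebra} together with Corollary~\ref{cormkrep} produces a subrepresentation of $\mathcal{X}/\mathcal{I}$ of rank $\ge 2$, contradicting rank $1$. Hence every summand of $C$ in $\mathscr{D}^{(m)}$ must come from cells strictly $>_J \mathcal{J}_k$, together with $\mathbbm{1}_{\mathtt{i}}$. For $k = 1$ this is realized by $C = M_0 \in \mathcal{J}_{M_0}$, exploiting the splitting $U \otimes M_0 \cong M_0 \oplus V$ with $V$ purely $\Bbbk$-split (Section~\ref{s3p}), which yields the cell representation $\mathbf{C}_{\mathcal{J}_0}$. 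For $k \ge 2$, however, one must verify via a case-by-case analysis using the multiplication formulas of \cite{Jo2} that, for every indecomposable $H$ drawn from $\mathcal{J}_{k-1}, \ldots, \mathcal{J}_1, \mathcal{J}_{M_0}$, or $\mathcal{J}_{\Bbbk \text{-split}}$, the tensor product $U \otimes H$ for $U \in \mathcal{J}_k$ fails to decompose as $H \oplus V$ with $V$ contained entirely in cells $<_J \mathcal{J}_k$: residual summands survive the quotient by $\mathcal{I}$ and force the rank above $1$. Carrying out this tensor-product verification cleanly is the technical core of the proof, and it reflects the fundamental asymmetry between $k = 1$ (with its exceptional bridging cell $\mathcal{J}_{M_0}$) and $k \ge 2$.
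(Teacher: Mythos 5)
There is a genuine gap: the decisive step of the argument is never carried out. The only facts the proof needs are that, for $\mathscr{B}=\mathscr{D}^{(m)}$ itself, the objects of $\mathcal{X}$ are honest $1$-morphisms of $\mathscr{D}^{(m)}$ on which the $2$-category acts by composition. Take $X\in\mathcal{X}$ nonzero in $\mathcal{X}/\mathcal{I}$. Since $k\geq 2$, the cell $\mathcal{J}_{k-1}$ exists and must annihilate $\mathbf{M}$, which rules out relevant indecomposable summands of $X$ in cells strictly above $\mathcal{J}_k$; so the summands of $X$ that matter lie in $\mathcal{J}_k$. Now the multiplication table \eqref{eqtable} gives, modulo the higher (hence annihilated) cells, $N_kX\in\mathrm{add}\{N_k\oplus W_k\}$ and $M_kX\in\mathrm{add}\{M_k\oplus S_k\}$; these two additive closures share no indecomposables, and both images are nonzero in the quotient because $\mathcal{J}_k$ is the apex. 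This forces at least two non-isomorphic indecomposables in $\mathcal{X}/\mathcal{I}$, contradicting rank $1$. You never make this disjointness observation. Instead you defer ``the technical core'' to an unperformed case-by-case tensor computation, and that computation is aimed at the wrong target: it concerns summands of $X$ (or $C$) in cells strictly $>_J\mathcal{J}_k$, which are already excluded by the apex condition, while the genuinely dangerous case --- summands in $\mathcal{J}_k$ itself --- is exactly the one you leave untouched.

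Two further points. First, the step ``if $C$ contains some $M_l$ as a summand, then Proposition~\ref{prop_coalgebra} and Corollary~\ref{cormkrep} produce a subrepresentation of rank $\geq 2$'' is not justified: a direct summand of the underlying $1$-morphism of a coalgebra need not inherit any compatible coalgebra structure, and $\mathrm{comod}_{\mathscr{B}}(C)$ is not assembled from the comodule categories of the summands of $C$. Second, the entire analysis of $C$ and of which summands receive a counit not factoring through the simple bimodule is a detour; the statement only concerns $\mathscr{D}^{(m)}$-constructibility (not the abelianizations), and the contradiction comes purely from the cell combinatorics of the \emph{objects} of $\mathcal{X}$, with the (co)algebra structure playing no role.
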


\begin{proof}
Assume towards contradiction that $\mathbf{M}$ is $\mathscr{D}$-constructible. Let $\mathcal{X}$ be as in Definition~\ref{def_constructible} and consider some object $X\in \mathcal{X}$ which is nonzero in the quotient by $\mathcal{I}$. Then, for each $U\in \mathcal{J}_k$,  we must have $UX\simeq X + \mathcal{I}$.

If $X\in \mathrm{add} \{ \mathcal{J}\,\vert\,\mathcal{J}>_J \mathcal{J}_k\}$, then the action of $\mathcal{J}_{k-1}$  
is nonzero on $X$, implying that $\mathcal{J}_k$ is not the apex of the representation (note that $k>1$). 
This means that all indecomposable summands of $X$ which matter
for the computations in $\mathcal{X}/\mathcal{I}$ are in 
$\mathcal{J}_{k}$.

From \eqref{eqtable}, we obtain that, modulo
higher two-sided cells, $N_kX_m\in \mathrm{add}\{ N_k\oplus W_k\}$
while $M_kX_m\in \mathrm{add}\{ M_k\oplus S_k\}$.
Since $\mathcal{J}_{k}$ is the apex of $\mathbf{M}$,
both $N_kX_m$ and $M_kX_m$ are non-zero.
This contradicts the assumption that $\mathbf{M}$ has rank $1$.
\end{proof}

\vspace{5mm}

\noindent
Department of Mathematics, Uppsala University, Box. 480,
SE-75106, Uppsala, SWEDEN, email: {\tt helena.jonsson\symbol{64}math.uu.se}

\end{document}